\documentclass[11pt, reqno]{amsart}
\usepackage[utf8]{inputenc}
\usepackage{indentfirst, amssymb, amsmath, amsthm, mathrsfs, setspace, indentfirst, enumerate,  mathrsfs, amsmath, amsthm}
\usepackage[colorlinks=true,linkcolor=purple, citecolor=blue,urlcolor=magenta]{hyperref}
\usepackage{graphicx}      
\usepackage{float}         
\usepackage{xcolor}        
\usepackage{colortbl}     
\usepackage{multirow}     
\usepackage{tikz}          
\definecolor{sectionlink}{RGB}{0,100,200} 
\newtheorem{ques}{Question}[section]

\newtheorem{theo}{Theorem}[section]
\newtheorem{lem}{Lemma}[section]

\newtheorem{exm}{Example}[section]
\newtheorem{defi}{Definition}[section]
\newtheorem{rem}{Remark}[section]
\newtheorem*{theoA}{Theorem A}
\newtheorem*{theoB}{Theorem B}
\newtheorem*{theoC}{Theorem C}
\newtheorem*{theoD}{Theorem D}
\newtheorem*{theoE}{Theorem E}

\numberwithin{equation}{section}
\newcommand{\beas}{\begin{eqnarray*}}
\newcommand{\eeas}{\end{eqnarray*}}
\newcommand{\bea}{\begin{eqnarray}}
\newcommand{\eea}{\end{eqnarray}}

\begin{document}

\title[ Hankel, Toeplitz, Hermitian--Toeplitz Determinants and the Zalcman Functional...]{Hankel, Toeplitz, Hermitian--Toeplitz Determinants and the Zalcman Functional for a Class of Biholomorphic Mappings on Complex Banach Spaces}
\author[ N. Sarkar and P. Das]{Nabadwip Sarkar and Pradip Das}
\address{Amity School of Applied Sciences, Amity University Mumbai, Panvel, Navi Mumbai, Maharashtra-410206, India}
\email{nsarkar@mum.amity.edu, nabadwipsarkar52@gmail.com}
\address{Department of Mathematics, Raiganj University, Raiganj, West Bengal-733134, India.}
\email{pradipsmath@gmail.com}

\makeatletter
\@namedef{subjclassname@2020}{\textup{2020} Mathematics Subject Classification}
\makeatother

\subjclass[2020]{Primary 32H02; Secondary 30C45.}
\keywords{Holomorphic function, Hankel, Toeplitz, Hermitian--Toeplitz Determinants, Zalcman Functional, Ozaki close-to-convex class}

\begin{abstract} In this paper, we investigate the second Hankel determinant, Toeplitz determinants, Hermitian--Toeplitz determinants and the generalized Zalcman functional for a class of normalized biholomorphic mappings on complex Banach spaces. Motivated by recent results for the corresponding class of analytic functions in the unit disk, we establish Banach space analogues of these determinant estimates. Our results extend the one-variable inequalities of Allu, Lecko and Thomas \cite{ALT2022} to the setting of complex Banach spaces, thereby providing higher-dimensional counterparts for the class corresponding to $\mathcal{F}_{O}(\lambda)$.

\end{abstract}

\maketitle

\section{Introduction and Preliminaries}

Let $\mathcal{A}$ denote the family of analytic functions in the open unit disk $\mathbb{U}=\{z\in\mathbb{C}:|z|<1\},$
normalized by
\begin{equation}\label{eq:A}
f(z)=z+\sum_{m=2}^{\infty}a_mz^m.
\end{equation}
Furthermore, let $\mathcal{S}\subset\mathcal{A}$ be the class of normalized univalent functions, and let $\mathcal{K}$ denote the subclass of $\mathcal{S}$ consisting of convex functions.

Throughout this paper, we shall frequently use the classical Carath\'eodory class, denoted by $\mathcal{P}$, which consists of analytic functions $p$ satisfying
\[
p(0)=1
\quad\text{and}\quad
\operatorname{Re}p(z)>0,\qquad z\in\mathbb{U}.
\]
Every function $p\in\mathcal{P}$ admits the Taylor expansion
\begin{equation}\label{eq:P}
p(z)=1+\sum_{m=1}^{\infty}p_mz^m
=1+p_1z+p_2z^2+p_3z^3+\cdots,
\qquad z\in\mathbb{U}.
\end{equation}

Let $X$ be a complex Banach space endowed with the norm $\|\cdot\|$, and let
\[
\mathbb{B}=\{x\in X:\|x\|<1\}
\]
be its open unit ball. We denote by $\mathcal{L}(X,Y)$ the Banach space of all bounded linear operators from $X$ into another complex Banach space $Y$. The identity operator on $X$ is denoted by $I$.

For each nonzero vector $x\in X$, define
\[
T(x)=
\left\{
T_x\in\mathcal{L}(X,\mathbb{C}):
T_x(x)=\|x\|,
\ \|T_x\|=1
\right\}.
\]
The Hahn--Banach theorem guarantees that $T(x)$ is nonempty. Moreover, for every $\xi\in\mathbb{C}\setminus\{0\}$,
\[
T_{\xi x}(\cdot)=\frac{|\xi|}{\xi}\,T_x(\cdot),
\]
which establishes a one-to-one correspondence between $T(\xi x)$ and $T(x)$.

Let $\mathcal{H}(\mathbb{B})$ denote the family of all holomorphic mappings from $\mathbb{B}$ into $X$. If $F\in\mathcal{H}(\mathbb{B})$, then for every $x\in \mathbb{B}$, the Fr\'echet--Taylor expansion is given by
\[
F(y)
=
\sum_{n=0}^{\infty}
\frac{1}{n!}
D^nF(x)\bigl((y-x)^n\bigr),
\]
for all $y$ sufficiently close to $x$, where
\[
D^nF(x)\bigl((y-x)^n\bigr)
=
D^nF(x)(y-x,\ldots,y-x),
\]
and $D^nF(x)$ denotes the $n$th Fr\'echet derivative of $F$ at $x$. Each $D^nF(x)$ is a bounded symmetric $n$-linear operator from $X^n$ into $X$.

A mapping $F\in\mathcal{H}(\mathbb{B})$ is called \emph{biholomorphic} if $F(\mathbb{B})$ is a domain in $X$ and the inverse mapping
\[
F^{-1}:F(\mathbb{B})\rightarrow \mathbb{B}
\]
exists and is holomorphic. Moreover, $F$ is said to be \emph{locally biholomorphic} whenever the Fr\'echet derivative $DF(x)$ is invertible with bounded inverse for every $x\in \mathbb{B}$.

A holomorphic mapping $F:B\rightarrow X$ is said to be \emph{normalized} if
\[
F(0)=0
\quad\text{and}\quad
DF(0)=I.
\]
Several important subclasses of normalized biholomorphic mappings in Banach spaces can be found in the monograph~\cite{GK2003}.

For later use, let $x_0\in X$ satisfy $\|x_0\|=1$ and choose $T_{x_0}\in T(x_0)$. We introduce the quantities $A_1=1$
and, for $n=2,3,4$,
\begin{equation}\label{eq:An}
A_n=\frac{1}{n!}T_{x_0}\left(D^nF(0)(x_0,\ldots,x_0)
\right),
\end{equation}
where the vector $x_0$ appears exactly $n$ times in the multilinear form $D^nF(0)$.

We shall also require the following definitions.

\begin{defi}
Let $f \in \mathcal{A}$ be locally univalent in the unit disk $\mathbb{D}$, and let
$-\frac{1}{2}<\lambda\leq 1$. Then $f\in \mathcal{F}(\lambda)$ if and only if
\begin{equation}\label{eq:Flambda}
\operatorname{Re}\left(1+\frac{zf''(z)}{f'(z)}\right)
>
\frac{1}{2}-\lambda,
\qquad z\in\mathbb{D}.
\end{equation}
\end{defi}

Clearly, when $-\frac{1}{2}<\lambda\leq\frac{1}{2}$, the class $\mathcal{F}(\lambda)$ forms a subclass of the class $\mathcal{C}$ of convex
functions, with $\mathcal{F}\!\left(\frac{1}{2}\right)=\mathcal{C}.$
Moreover, since $\frac{1}{2}-\lambda\geq-\frac{1}{2}$ for $\frac{1}{2}\leq\lambda\leq 1$, every function in $\mathcal{F}(\lambda)$ is close-to-convex whenever $\frac{1}{2}\leq\lambda\leq 1$.

Although the functions in $\mathcal{F}(\lambda)$ are close-to-convex for $\frac12\leq\lambda\leq1$, Pfaltzgraff, Reade, and Umezawa~\cite{Pfaltzgraff1976} showed that the classes $\mathcal{F}(\lambda)$ contain non-starlike functions for every $\frac{1}{2}<\lambda\leq 1$. Furthermore, Umezawa \cite{Umezawa1952} proved that functions in $\mathcal{F}(1)$ are convex in one direction. Following the terminology in the literature, we call the functions in $\mathcal{F}(\lambda)$, for $\frac{1}{2}\leq\lambda\leq1$, the
\emph{Ozaki close-to-convex functions} and denote this class by
$\mathcal{F}_{O}(\lambda)$. \par

\medskip
The following definition extends the class of Ozaki convex to convex mappings of type B by introducing a spiral parameter.

\begin{defi}
Let \(X\) be a complex Banach space, \(\mathbb{B}\) be the unit ball of \(X\), and let
\(F:\mathbb{B}\to X\) be a normalized locally biholomorphic mapping.
Then \(F\) is called a \emph{Ozaki convex to convex mapping of type B} on
\(\mathbb{B}\) if
\[
\operatorname{Re}\left\{
T_x\!\left(\frac{(DF(x))^{-1}\bigl(D^2F(x)(x,x)+DF(x)x\bigr)}{\parallel x\parallel} 
\right)
\right\}\geq \frac{1}{2}-\lambda,
\]
for $\frac{1}{2}\leq \lambda \leq 1$, every \(x\in\mathbb{B}\setminus\{0\}\) and every \(T_x\in T(x)\). We denote this class by $\widehat{\mathcal{F}}_{\lambda}(\mathbb{B})$.
\end{defi}
\begin{rem}
The class \(\widehat{\mathcal{F}}_{\lambda}(\mathbb{B})\) of Ozaki convex to convex mappings of type~B is a natural extension of the classical Ozaki class to complex Banach spaces. Indeed, when \(X=\mathbb{C}\) and \(\mathbb{B}=\mathbb{U}=\{z\in\mathbb{C}:|z|<1\}\), the supporting functional is given by
\[
T_z(w)=\frac{w}{|z|}, \qquad z\neq 0,
\]
and hence
\[
\frac{(Df(z))^{-1}\bigl(D^{2}f(z)(z,z)+Df(z)z\bigr)}{|z|}= 1+\frac{zf''(z)}{f'(z)}.
\]
Consequently, the defining condition of \(\widehat{\mathcal{F}}_{\lambda}(\mathbb{B})\) reduces to
\[
\operatorname{Re}\left(1+\frac{zf''(z)}{f'(z)}\right)\geq\frac{1}{2}-\lambda, \qquad \frac{1}{2}\le\lambda\le 1,
\]
which is precisely the defining condition of the classical Ozaki close-to-convex class \(\mathcal{F}_0(\lambda)\). Thus, \(\widehat{\mathcal{F}}_{\lambda}(\mathbb{B})\) provides a natural higher-dimensional analogue of the Ozaki class in the setting of complex Banach spaces.
\end{rem}
We first recall the definitions of the Hankel, Toeplitz and Hermitian--Toeplitz determinants for functions in the class $\mathcal{A}$.

\begin{defi}
Let \(f\in\mathcal{A}\) be given by (\ref{eq:A}). Then, for integers \(q\geq 1\) and \(n\geq 0\), the \(q\)th Hankel determinant is defined by
\[
H_q(n)(f):=
\begin{vmatrix}
a_n & a_{n+1} & \cdots & a_{n+q-1}\\
a_{n+1} & a_{n+2} & \cdots & a_{n+q}\\
\vdots & \vdots & \ddots & \vdots\\
a_{n+q-1} & a_{n+q} & \cdots & a_{n+2q-2}
\end{vmatrix}.
\]
In particular,
\[
H_2(2)(f)=a_2a_4-a_3^2.
\]
\end{defi}

\begin{defi}
Let \(f\in\mathcal{A}\) be given by (\ref{eq:A}). Then, for integers $q\ge1$ and $n\ge0$, the \emph{$q$th Toeplitz determinant} is defined by
\[
T_q(n)(f):=
\begin{vmatrix}
a_n & a_{n+1} & \cdots & a_{n+q-1}\\
a_{n+1} & a_n & \cdots & a_{n+q-2}\\
\vdots & \vdots & \ddots & \vdots\\
a_{n+q-1} & a_{n+q-2} & \cdots & a_n
\end{vmatrix}.
\]
In particular,
\[
T_3(1)(f)
=
\begin{vmatrix}
1 & a_2 & a_3\\
a_2 & 1 & a_2\\
a_3 & a_2 & 1
\end{vmatrix}
=
1-2a_2^2+2a_2^2a_3-a_3^2,
\]
and
\[
T_3(2)(f)
=
\begin{vmatrix}
a_2 & a_3 & a_4\\
a_3 & a_2 & a_3\\
a_4 & a_3 & a_2
\end{vmatrix}
=
a_2^3-2a_2a_3^2+2a_3^2a_4-a_2a_4^2.
\]
\end{defi}

\begin{defi}
Let \(f\in\mathcal{A}\) be given by (\ref{eq:A}). Then, for integers $q\ge1$ and $n\ge0$, the \emph{$q$th Hermitian--Toeplitz determinant} is defined by
\[
T_{q,n}(f):=
\begin{vmatrix}
a_n & a_{n+1} & \cdots & a_{n+q-1}\\
\overline{a_{n+1}} & a_n & \cdots & a_{n+q-2}\\
\vdots & \vdots & \ddots & \vdots\\
\overline{a_{n+q-1}} & \overline{a_{n+q-2}} & \cdots & a_n
\end{vmatrix},
\]
where $\overline{a_k}$ denotes the complex conjugate of $a_k$. In the special case when all coefficients $a_n$ are real, $T_{q,n}(f)$ reduces to the ordinary Toeplitz determinant.

In particular,
\[
T_{3,1}(f)
=
\begin{vmatrix}
1 & a_2 & a_3\\
\overline{a_2} & 1 & a_2\\
\overline{a_3} & \overline{a_2} & 1
\end{vmatrix}
=
1-2|a_2|^2+2\operatorname{Re}(a_2^2\overline{a_3})-|a_3|^2.
\]
\end{defi}

Pommerenke \cite{Pommerenke1966} established a fundamental result for the class $\mathcal{S}$, which stimulated extensive research on analogous coefficient problems for various subclasses of univalent functions. More recently, considerable effort has
been devoted to obtaining sharp estimates for the second Hankel determinant $H_{2,2}(f)=a_2a_4-a_3^2,$ and several significant results have been reported in the literature (see, for example, \cite{CKKLS2018,CKKLS2017,XDL2026,XHX2025}). \\
In contrast, the study of Toeplitz determinants has emerged only in recent years, beginning with the work in \cite{ATA2018}. Along similar lines, Hermitian--Toeplitz determinants were first investigated in \cite{CKLS2020}.

We now turn our attention to the Zalcman functional, its connection with the celebrated Zalcman conjecture and the generalized formulation introduced by Ma \cite{M1999}. \\
In the early 1970s, Lawrence Zalcman conjectured that if \(f\in\mathcal{S}\) is of the form (\ref{eq:A}), then
\[
\left|a_n^2-a_{2n-1}\right|\le (n-1)^2,\qquad n\ge 2.
\]
Moreover, equality is attained by the Koebe function $k(z)=\frac{z}{(1-z)^2},\;\; z\in\mathbb{D},$
and its rotations.

\begin{defi}
Let $f(z)=z+\sum_{n=2}^{\infty}a_nz^n\in\mathcal{A}.$
For integers \(m,n\ge2\), the \emph{generalized Zalcman functional} is defined by
$J_{m,n}(f):=a_ma_n-a_{m+n-1}.$
In particular, $J_{2,3}(f)=a_2a_3-a_4.$
\end{defi}

Ma~\cite{M1999} conjectured that if \(f\in\mathcal{S}\), then
\[
|J_{m,n}(f)|\le (m-1)(n-1), \qquad m,n\ge2.
\]
Furthermore, he proved this conjecture for the class \(\mathcal{S}^{*}\), and also for functions in \(\mathcal{S}\) whose Taylor coefficients are all real.\\

More recently, in 2022, Allu, Lecko and Thomas \cite{ALT2022} established sharp bounds for the second Hankel determinant, several Toeplitz determinants and several Hermitian--Toeplitz determinants for functions belonging to the class of Ozaki close-to-convex functions. They also obtained a sharp estimate for the generalized Zalcman functional $J_{2,3}(f)$. Their results are summarized in the following theorems.

\begin{theoA}\cite[Theorem 3.1]{ALT2022}
Let \(f\in \mathcal{F}_{O}(\lambda)\), where $\frac{1}{2}\le \lambda\le 1.$
Then
\[
\left|H_2(2)(f)\right|
\le
\frac{(1+2\lambda)^2(17-10\lambda)}
{192(3-2\lambda)}.
\]
Moreover, the inequality is sharp.
\end{theoA}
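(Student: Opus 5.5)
The plan is to reduce the determinant to coefficients of a Carath\'eodory function and then optimize using the standard parametrization of $p_2,p_3$. Put $\beta=\frac{1+2\lambda}{2}\in[1,\tfrac32]$. The defining inequality \eqref{eq:Flambda} says exactly that
\[
p(z)=\frac{1}{\beta}\Bigl(1+\frac{zf''(z)}{f'(z)}-\tfrac12+\lambda\Bigr)
\]
belongs to $\mathcal{P}$. Hence
\[
zf''(z)=\beta\,(p(z)-1)\,f'(z).
\]

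\textbf{Coefficient formulas.} Comparing coefficients of $z,z^2,z^3$ in this identity gives
\[
a_2=\frac{\beta p_1}{2},\qquad a_3=\frac{\beta p_2+\beta^2p_1^2}{6},\qquad a_4=\frac{1}{12}\Bigl(\beta p_3+\tfrac32\beta^2p_1p_2+\tfrac12\beta^3p_1^3\Bigr).
\]
Substituting these into $a_2a_4-a_3^2$ expresses $H_2(2)(f)$ as a polynomial in $p_1,p_2,p_3$ whose coefficients are polynomials in $\beta$. There is an overall factor $\beta^2/144$, matching the factor $(1+2\lambda)^2$ in the claimed bound.

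\textbf{Parametrization.} The class $\mathcal{F}_O(\lambda)$ and the functional $|H_2(2)|$ are invariant under rotations $e^{-i\theta}f(e^{i\theta}z)$. So we may assume $p_1=c\in[0,2]$. We then invoke the well-known Libera--Z{\l}otkiewicz/Cho--Kowalczyk--Lecko type formulas:
\[
2p_2=c^2+(4-c^2)\zeta_1,
\]
\[
4p_3=c^3+2(4-c^2)c\zeta_1-(4-c^2)c\zeta_1^2+2(4-c^2)(1-|\zeta_1|^2)\zeta_2,
\]
with $|\zeta_1|,|\zeta_2|\le1$. This writes
\[
H_2(2)(f)=\frac{\beta^2}{c_0}\bigl(A(c)+B(c)\zeta_1+C(c)\zeta_1^2+D(c)(1-|\zeta_1|^2)\zeta_2\bigr)
\]
for explicit polynomials $A,B,C,D$ in $c$ (depending on $\beta$) and an explicit constant $c_0$. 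The endpoint cases are handled directly. At $c=0$ one gets $|H_2(2)|\le \beta^2/36$, which must be compared with the target. At $c=2$ everything is explicit.

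\textbf{Optimization.} For $c\in(0,2)$ we apply the triangle inequality in $\zeta_2$ and set $x=|\zeta_1|\in[0,1]$. This gives
\[
|H_2(2)(f)|\le \frac{\beta^2}{c_0}\,\Psi(c,x),
\qquad
\Psi(c,x)=|A|+|B|x+|C|x^2+|D|(1-x^2).
\]
The expected main obstacle is this two-variable maximization. The signs of $A,B,C$ vary with $c$ and $\beta$, and the coefficient $|C|-|D|$ of $x^2$ can change sign, so one must split into cases. First I would show $|C|\le|D|$ on the relevant range. Then $\Psi$ is a concave quadratic in $x$, maximized either at $x=1$ or at the interior point $x^\ast=|B|/(2(|D|-|C|))$. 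I expect the maximum to occur at an interior $x^\ast$ together with a critical $c^\ast\in(0,2)$ of the resulting one-variable function. The denominator $3-2\lambda$ in the claimed bound signals exactly such an interior critical point.

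One must then verify that this critical value dominates the values at $c=0$, at $c=2$, and on the boundary $x=1$. This is a routine but tedious inequality in $\beta\in[1,\tfrac32]$, which I would check by clearing denominators and factoring.

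\textbf{Sharpness.} Take the extremal parameters $c^\ast$, $\zeta_1=\pm x^\ast$ (sign chosen so that the terms align) and $\zeta_2=\pm1$. These determine $p(z)=(1+\dots)$ explicitly, for instance as a convex combination of two M\"obius half-plane maps. Then define $f$ by $f'(z)=\exp\bigl(\beta\int_0^z\frac{p(t)-1}{t}\,dt\bigr)$. With these parameters every step of the estimate holds with equality.
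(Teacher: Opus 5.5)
Your setup is correct, and it is the route the paper relies on: your coefficient formulas give $H_2(2)(f)=\frac{\beta^2}{144}(6p_1p_3-4p_2^2+\beta p_1^2p_2-\beta^2p_1^4)$, which is exactly the $p$-expression the paper writes before deferring to Allu--Lecko--Thomas. The parametrization plus triangle inequality is also the right tool. The gap is in the optimization step, whose structure you have predicted backwards. Carrying out the substitution gives $c_0=144$ and $A=-\tfrac12(\beta-1)(2\beta+1)c^4$, $B=\tfrac12(2+\beta)(4-c^2)c^2$, $C=-\tfrac12(4-c^2)(8+c^2)$, $D=3c(4-c^2)$. Your planned step ``first show $|C|\le|D|$'' fails for every $c\in[0,2)$, because $|C|-|D|=\tfrac12(4-c^2)(2-c)(4-c)>0$. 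Hence $\Psi(c,\cdot)$ is convex and, since $|B|\ge0$, nondecreasing on $[0,1]$. The maximum is therefore at $x=1$, where the $\zeta_2$-term vanishes; there is no interior $x^\ast$. The signs also do not vary: $A\le0\le B$ and $C\le0$ for all $c\in[0,2]$ and $\beta\in[1,\tfrac32]$. This is exactly why $\zeta_1=-1$ makes $A$, $B\zeta_1$, $C\zeta_1^2$ all nonpositive, so the triangle inequality loses nothing.

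What remains is a one-variable problem. We have $\Psi(c,1)=|A|+|B|+|C|=16+2(1+\beta)c^2-(2-\beta)(1+\beta)c^4$. This is a concave quadratic in $c^2$ with vertex at $c^2=1/(2-\beta)=2/(3-2\lambda)\in[1,2]$ and maximum value $3(11-5\beta)/(2-\beta)$. Multiplying by $\beta^2/144$ gives $\beta^2(11-5\beta)/(48(2-\beta))$, which is exactly the claimed bound. So the factor $3-2\lambda$ comes from the vertex in $c^2$, not from an interior critical point in $x$, and the endpoints $c=0,2$ need no separate treatment. Consequently your sharpness recipe ($|\zeta_1|=x^\ast<1$, $\zeta_2=\pm1$) would not produce equality. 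The extremal data are $\zeta_1=-1$ and $c=\sqrt{2/(3-2\lambda)}$, i.e.\ $p(z)=(1-z^2)/(1-cz+z^2)$ (an average of two M\"obius maps) and $f'(z)=(1-cz+z^2)^{-(1/2+\lambda)}$. This is the function used in the paper's sharpness example. There is also a minor slip: at $c=0$ one has $|H_2(2)(f)|=|a_3|^2\le\beta^2/9$, not $\beta^2/36$. This is harmless, since it still lies below the bound because $48(2-\beta)<9(11-5\beta)$.
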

\begin{theoB}\cite[Theorem 4.2]{ALT2022}
Let \(f\in\mathcal{F}_{O}(\lambda)\), where $\frac{1}{2}\le \lambda\le 1.$
Then
\[
\left|T_3(1)(f)\right|
\le
\frac{(4+3\lambda+2\lambda^2)(7+6\lambda+8\lambda^2)}{18}.
\]
Moreover, the inequality is sharp.
\end{theoB}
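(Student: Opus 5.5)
We would work directly with the Carathéodory representation of the class. Since $f\in\mathcal{F}_{O}(\lambda)$, there is $p\in\mathcal{P}$ of the form (\ref{eq:P}) such that
\[
1+\frac{zf''(z)}{f'(z)}=\Bigl(\tfrac12-\lambda\Bigr)+\Bigl(\tfrac12+\lambda\Bigr)p(z),\qquad z\in\mathbb{D}.
\]
Comparing the coefficients of $z$ and $z^2$ gives
\[
a_2=\frac{(1+2\lambda)p_1}{4},\qquad a_3=\frac{1+2\lambda}{12}\Bigl(p_2+\frac{(1+2\lambda)p_1^2}{4}\Bigr).
\]
Substituting into $T_3(1)(f)=1-2a_2^2+2a_2^2a_3-a_3^2$ turns the problem into maximizing the modulus of an explicit polynomial $\Phi_\lambda(p_1,p_2)$ over the coefficient body of $\mathcal{P}$.

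For the coefficient body I would use the classical Libera--Zlotkiewicz parametrization
\[
2p_2=p_1^2+(4-p_1^2)\zeta,\qquad |\zeta|\le1 .
\]
The class is rotation invariant: $e^{-i\theta}f(e^{i\theta}z)$ multiplies $a_2$ by $e^{i\theta}$ and $a_3$ by $e^{2i\theta}$. However, $T_3(1)$ is \emph{not} rotation invariant, because the constant term $1$ does not rotate. So I cannot normalize $p_1\ge0$ freely. I would keep $p_1=ce^{i\theta}$ with $c\in[0,2]$.

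I would first regroup
\[
T_3(1)(f)=1-a_3^2-2a_2^2(1-a_3)
\]
and apply the triangle inequality in the form
\[
|T_3(1)(f)|\le 1+|a_3|^2+2|a_2|^2\,|1-a_3| .
\]
I would then bound the right-hand side in terms of $c$ and $x=|\zeta|$. This uses $|a_2|=(1+2\lambda)c/4$ and
\[
|a_3|\le \frac{1+2\lambda}{24}\Bigl((3+2\lambda)\frac{c^2}{2}+(4-c^2)x\Bigr),
\]
together with $|1-a_3|\le 1+|a_3|$. This reduces the problem to maximizing an explicit polynomial $G_\lambda(c,x)$ on $[0,2]\times[0,1]$. $G_\lambda$ is increasing in $x$, so we may take $x=1$. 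It then remains to maximize a one-variable polynomial in $c$, checking critical points and endpoints for each $\lambda\in[\frac12,1]$.

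The main obstacle is that this crude bound is probably not sharp. At $\lambda=\tfrac12$ the target value is $4$, while the naive estimate $1+2|a_2|^2+2|a_2|^2|a_3|+|a_3|^2$ gives $6$. So the triangle inequality must be applied more cleverly, keeping the phase coupling between $a_2^2$ and $a_3$.

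What I would try first is to choose the rotation $\theta$ and the phase of $\zeta$ so that $a_2^2$ and $a_3$ are real with prescribed signs. I would then show that the extremal configuration has $a_2^2<0$ and $a_3$ real. In that case
\[
|T_3(1)|=\bigl|1+2|a_2|^2(1-a_3)-a_3^2\bigr| ,
\]
and the problem becomes a real two-variable optimization, with $a_3$ ranging over an interval depending on $c$. Its interior maximum, presumably at some $c<2$, should yield the product $(4+3\lambda+2\lambda^2)(7+6\lambda+8\lambda^2)/18$. Finally, sharpness would be verified by building the extremal $p$ from the optimal $(c,\zeta)$, for instance via a suitable Möbius-type Carathéodory function, and integrating back to $f$.
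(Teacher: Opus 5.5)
\textbf{There is a genuine gap, plus a coefficient error.}

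The step that is supposed to do all the work is only announced, not argued: showing that the maximum of $|T_3(1)(f)|$ over the complex data $(p_1,\zeta)$ occurs at a real configuration with $a_2^2<0$. Your guess about where this maximum sits is also wrong. The extremal function is the one in the example following Theorem~\ref{T1}, $f(z)=\frac{1}{2i\lambda}\bigl((1-iz)^{-2\lambda}-1\bigr)$. It corresponds to $p(z)=\frac{1+iz}{1-iz}$, i.e.\ $p_1=2i$ and $p_2=-2$. So the maximum is at the boundary $|p_1|=2$, not at an interior $c<2$.

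The missing idea is a better grouping than $1-a_3^2-2a_2^2(1-a_3)$. Write
\[
T_3(1)(f)=1-2a_2^2-a_3\,(a_3-2a_2^2),\qquad\text{so}\qquad |T_3(1)(f)|\le 1+2|a_2|^2+|a_3|\,|a_3-2a_2^2|.
\]
Your crude bound is exactly this inequality with $|a_3-2a_2^2|$ replaced by $|a_3|+2|a_2|^2$, and that replacement is where sharpness is lost.

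Now $a_3-2a_2^2=\frac{1+2\lambda}{12}\bigl(p_2-(1+2\lambda)p_1^2\bigr)$. The sharp inequality $|p_2-\nu p_1^2|\le 2\max\{1,|2\nu-1|\}$ for $p\in\mathcal{P}$ then gives $|a_3-2a_2^2|\le\frac{(1+2\lambda)(1+4\lambda)}{6}$. The same lemma with $\nu=-\frac{1+2\lambda}{2}$ gives $|a_3|\le\frac{(1+\lambda)(1+2\lambda)}{3}$, and $|p_1|\le 2$ gives $|a_2|\le\frac{1+2\lambda}{2}$. Combining these,
\[
|T_3(1)(f)|\le 1+\frac{(1+2\lambda)^2}{2}+\frac{(1+\lambda)(1+4\lambda)(1+2\lambda)^2}{18}=\frac{(4+3\lambda+2\lambda^2)(7+6\lambda+8\lambda^2)}{18}.
\]
For $p_1=2i$, $p_2=-2$, the three terms $1$, $-2a_2^2$ and $-a_3(a_3-2a_2^2)$ are all positive, and each of the three bounds is attained. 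This gives sharpness. No phase analysis is needed, even though $T_3(1)$ is not rotation invariant. (The paper itself does not reprove this statement; it quotes [ALT2022, Theorem 4.2].)

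Separately, your formula for $a_3$ is off by a factor of $2$ in the $p_1^2$ term. Comparing $z^2$-coefficients gives $6a_3-4a_2^2=(\frac12+\lambda)p_2$. Hence $a_3=\frac{1+2\lambda}{12}\bigl(p_2+\frac{1+2\lambda}{2}p_1^2\bigr)$, as in Lemma~\ref{L0}.

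With your version, $\max|a_3|$ would be $\frac{(1+2\lambda)(3+2\lambda)}{12}$. That equals $\frac23$ at $\lambda=\frac12$, whereas convex functions reach $|a_3|=1$. So your maximization of $G_\lambda$ would be run on wrong data. Your own check ``the naive estimate gives $6$'' silently uses the correct value $|a_3|\le 1$.

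Finally, with $p_1=ce^{i\theta}$ complex, the Libera--Zlotkiewicz relation must read $2p_2=p_1^2+(4-|p_1|^2)\zeta$.
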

\begin{theoC}\cite[Theorem 4.3]{ALT2022}
Let \(f\in\mathcal{F}_{O}(\lambda)\), where $\frac{1}{2}\le \lambda\le 1.$
Then
\[
\left|T_3(2)(f)\right|
\le
\frac{(1+2\lambda)^3(9+5\lambda+2\lambda^2)(25+17\lambda+10\lambda^2)}{864}.
\]
Moreover, the inequality is sharp.
\end{theoC}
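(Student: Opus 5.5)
The plan is to reduce everything to the Carath\'eodory class $\mathcal{P}$. Put $\beta=\lambda+\tfrac12\in[1,\tfrac32]$. For $f\in\mathcal{F}_{O}(\lambda)$, the defining inequality (\ref{eq:Flambda}) says that
\[
1+\frac{zf''(z)}{f'(z)}=\Bigl(\tfrac12-\lambda\Bigr)+\beta\,p(z),\qquad\text{equivalently}\qquad \frac{zf''(z)}{f'(z)}=\beta\bigl(p(z)-1\bigr),
\]
for some $p\in\mathcal{P}$ with expansion (\ref{eq:P}). Comparing coefficients of $z,z^2,z^3$ should give, in order,
\[
a_2=\tfrac{\beta}{2}p_1,\qquad
a_3=\tfrac{\beta}{6}\bigl(p_2+\beta p_1^2\bigr),\qquad
a_4=\tfrac{\beta}{12}\Bigl(p_3+\tfrac{3\beta}{2}p_1p_2+\tfrac{\beta^2}{2}p_1^3\Bigr).
\]
These come from solving $2a_2=\beta p_1$, $6a_3-4a_2^2=\beta p_2$ and $12a_4-18a_2a_3+8a_2^3=\beta p_3$. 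I will recheck the constants carefully, since the whole argument depends on them.

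Next I use the algebraic identity
\[
T_3(2)(f)=(a_2-a_4)\bigl(a_2^2-2a_3^2+a_2a_4\bigr),
\]
which can be confirmed by direct expansion, and bound the two factors separately:
\[
|T_3(2)(f)|\le |a_2-a_4|\cdot\bigl|a_2^2-2a_3^2+a_2a_4\bigr|.
\]
For each factor, substitute the formulas above. Then use $|p_k|\le 2$ together with the Libera--Złotkiewicz parametrisation
\[
2p_2=p_1^2+(4-p_1^2)\zeta,
\]
and its analogue for $p_3$ with parameters $\zeta,\eta$ in the closed unit disk. After a rotation we may assume $p_1=c\in[0,2]$. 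Each factor then becomes a function of $c$, $|\zeta|$ and $|\eta|$, and the triangle inequality followed by maximisation over $|\zeta|,|\eta|\le1$ and then over $c\in[0,2]$ should give the bounds. I expect these to be $\frac{(1+2\lambda)(9+5\lambda+2\lambda^2)}{24}$ and $\frac{(1+2\lambda)^2(25+17\lambda+10\lambda^2)}{36}$ (in some order), because their product is exactly the right-hand side of Theorem C.

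The main obstacle is that the two factors need not be maximised by the same $p$. So sharpness requires exhibiting a single extremal function whose coefficients make both triangle inequalities equalities simultaneously, possibly after a rotation $e^{-i\theta}f(e^{i\theta}z)$ that changes the relative signs of $a_2,a_3,a_4$. I would first test
\[
p(z)=\frac{1+z}{1-z},\qquad\text{for which}\qquad f'(z)=(1-z)^{-2\beta},
\]
so that $a_2=\beta$, $a_3=\beta(2\beta+1)/3$ and $a_4=\beta(2\beta+1)(\beta+1)/6$. If the resulting constants do not match the factors above, I would instead try $p(z)=\frac{1+z^2}{1-z^2}$ or a mixed $p$ with $p_1\ne0$ chosen at the critical value of $c$ found in the maximisation. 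The hardest technical step is the maximisation over $c$ and $|\zeta|$ for the quadratic factor $a_2^2-2a_3^2+a_2a_4$. There I would locate the interior critical points explicitly and compare them with the endpoint $c=2$, using $\beta\in[1,\tfrac32]$ to fix the signs of the resulting polynomials in $\lambda$.
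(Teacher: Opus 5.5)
Your route is the one behind the cited result. The paper does not reprove Theorem C, but in Theorem 4.1 and Example 4.1 it uses exactly your factorization $T_3(2)(f)=(a_2-a_4)(a_2^2-2a_3^2+a_2a_4)$. Its extremal is your test function rotated by $\theta=\pi/2$, namely $f'(z)=(1-iz)^{-(1+2\lambda)}$. Your coefficient formulas agree with Lemma 2.1.

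\textbf{The problem: your predicted factor bounds are wrong.} They are not the $/24$ and $/36$ quantities you guess, but
\[
|a_2-a_4|\le \frac{(1+2\lambda)(9+5\lambda+2\lambda^2)}{12},\qquad |a_2^2-2a_3^2+a_2a_4|\le \frac{(1+2\lambda)^2(25+17\lambda+10\lambda^2)}{72}.
\]
The product is unchanged, since $12\cdot 72=24\cdot 36$. This matters for sharpness. The rotated function has $a_2=i\beta$, $a_3=-\beta(2\beta+1)/3$ and $a_4=-i\beta(\beta+1)(2\beta+1)/6$, and it attains both bounds simultaneously. Under your stated test ("if the resulting constants do not match the factors above") you would discard it. Your fallback $p(z)=(1+z^2)/(1-z^2)$ gives $a_2=a_4=0$ and hence $T_3(2)(f)=0$.

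\textbf{Both factor bounds are easier than you expect.} For the first, no parametrisation is needed. All coefficients of $a_4$ in $p_1,p_2,p_3$ are positive, so $|a_2-a_4|\le|a_2|+|a_4|\le\beta+\beta(\beta+1)(2\beta+1)/6$.

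For the second factor, one computes
\[
144\beta^{-2}(a_2^2-2a_3^2+a_2a_4)=36p_1^2-5\beta^2p_1^4-7\beta p_1^2p_2-8p_2^2+6p_1p_3 .
\]
This expression mixes rotation weights $2$ and $4$, so you cannot normalise $p_1=c\ge0$ outright. You can do so after splitting off $36|p_1|^2$ by the triangle inequality, which loses nothing here. With the parametrisation of $p_2,p_3$ and $x=|\zeta|$, the triangle-inequality bound is
\[
36c^2+\Bigl(5\beta^2+\tfrac{7\beta}{2}+\tfrac12\Bigr)c^4+3c(4-c^2)+\Bigl(\tfrac{7\beta}{2}+1\Bigr)c^2(4-c^2)x+(4-c^2)\Bigl(8-3c-\tfrac{c^2}{2}\Bigr)x^2 .
\]
It is nondecreasing in $x$, because $8-3c-c^2/2\ge 0$ on $[0,2]$. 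At $x=1$ it equals $32+(30+14\beta)c^2+5\beta^2c^4$, which is increasing in $c$. So the maximum $152+56\beta+80\beta^2$ occurs at $c=2$. Multiplying by $\beta^2/144$ gives exactly the $/72$ bound above.

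There are no interior critical points to compare, and the equality case $c=2$ (after restoring the phase) is precisely $p(z)=(1+iz)/(1-iz)$.
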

\begin{theoD}\cite[Theorem 5.1]{ALT2022}
Let \(f\in\mathcal{F}_{O}(\lambda)\), where $\frac{1}{2}\le \lambda\le 1.$
Then
\[
-\frac{(2\lambda-1)^2(2\lambda+5)^2}{64\lambda(2\lambda+3)}\leq T_{3,1}(f)\leq  \begin{cases} 1 &\lambda\in [1/2, (\sqrt{153}-5)/8]\\
1+\frac{(1+2\lambda)^2(4\lambda^2+5\lambda-8)}{18}, &\lambda\in ((\sqrt{153}-5)/8,1].
\end{cases}
\]
 The inequalities are sharp.
\end{theoD}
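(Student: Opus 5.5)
This is Theorem D of \cite{ALT2022}, a one-variable statement about $f\in\mathcal{F}_{O}(\lambda)$. The plan is to write $a_2,a_3$ in terms of Carath\'eodory coefficients and then extremize
\[
T_{3,1}(f)=1-2|a_2|^2+2\operatorname{Re}(a_2^2\overline{a_3})-|a_3|^2
\]
over the parameter region given by the Carath\'eodory--Toeplitz description of $(p_1,p_2)$.

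\textbf{Step 1: coefficients.} Set $\beta=\frac12-\lambda\in[-\frac12,0]$. Define $p$ by
\[
1+\frac{zf''(z)}{f'(z)}=\beta+(1-\beta)p(z),
\]
so that $p\in\mathcal{P}$. Comparing coefficients gives
\[
2a_2=(1-\beta)p_1,\qquad 6a_3=(1-\beta)\bigl(p_2+(1-\beta)p_1^2\bigr).
\]
With $1-\beta=\frac{1+2\lambda}{2}$ this reads
\[
a_2=\frac{1+2\lambda}{4}\,p_1,\qquad a_3=\frac{1+2\lambda}{12}\Bigl(p_2+\frac{1+2\lambda}{2}p_1^2\Bigr).
\]

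\textbf{Step 2: reduction to real variables.} $T_{3,1}$ is invariant under the rotation $f(z)\mapsto e^{-i\theta}f(e^{i\theta}z)$, so we may assume $p_1=c\in[0,2]$. Use the standard representation
\[
p_2=\tfrac12\bigl(c^2+(4-c^2)\zeta\bigr),\qquad |\zeta|\le1.
\]
Then $a_2$ is real, so $\operatorname{Re}(a_2^2\overline{a_3})=a_2^2\operatorname{Re}a_3$. Hence $T_{3,1}$ becomes
\[
1-2a_2^2+2a_2^2\operatorname{Re}a_3-|a_3|^2,
\]
where $a_3=A(c)+B(c)\zeta$ with $A,B$ real and $B\ge0$.

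\textbf{Step 3: upper bound.} Write $\zeta=x+iy$. The $y$-dependence is $-B^2y^2$, so $y=0$ is optimal. For fixed $c$ the expression is then a concave quadratic in $x\in[-1,1]$. Maximize it in $x$, substitute back, and obtain a one-variable function $\Phi(c)$ on $[0,2]$. We have $\Phi(0)=1$, and at $c=2$ (forced $\zeta$) we get $a_2=\frac{1+2\lambda}{2}$ and $a_3=\frac{(1+2\lambda)(3+2\lambda)}{12}$. After simplification this should give $1+\frac{(1+2\lambda)^2(4\lambda^2+5\lambda-8)}{18}$. The claim to establish is $\max_{[0,2]}\Phi=\max\{\Phi(0),\Phi(2)\}$, which yields the threshold at $4\lambda^2+5\lambda-8=0$, that is, $\lambda=(\sqrt{153}-5)/8$. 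I would prove this by showing $\Phi$ is convex in $t=c^2$, or that its interior critical points are minima. Extremals are $f(z)=z$ and the function with $p(z)=\frac{1+z}{1-z}$.

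\textbf{Step 4: lower bound (main obstacle).} Here we minimize. The worst $\zeta$ lies on the boundary: $|\zeta|=1$ with $\operatorname{Re}a_3$ pushed to make $2a_2^2\operatorname{Re}a_3-|a_3|^2$ most negative. One must balance $-2a_2^2$ against this term, and the minimum is attained at an interior $c$, which is where $\lambda$ enters the denominator $64\lambda(2\lambda+3)$. Concretely, I would set $t=c^2\in[0,4]$ and $\zeta=e^{i\phi}$, reducing to a two-variable minimization. I would eliminate $\phi$ first: the expression is quadratic in $\cos\phi$. This leaves a rational function of $t$, and its critical point should give $-\frac{(2\lambda-1)^2(2\lambda+5)^2}{64\lambda(2\lambda+3)}$. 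Checking that this critical point lies in $[0,4]$ and that the $\cos\phi$-minimizer lies in $[-1,1]$ throughout $\frac12\le\lambda\le1$ is the delicate bookkeeping. Sharpness follows by building $p$ from the optimal $(c,\zeta)$ via the Carath\'eodory representation, for instance a two-point measure.
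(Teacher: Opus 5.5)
Your route is correct and is the standard one. The paper itself gives no proof of Theorem D; it only cites Allu--Lecko--Thomas. You reproduce their coefficient formulas (Lemma~\ref{L0}), and the paper's Section~5 extremals correspond exactly to your choices $c=0$, $c=2$, and $\zeta=-1$ with $p(z)=\frac{1-z^2}{1-cz+z^2}$. Your $p(z)=\frac{1+z}{1-z}$ yields the correctly normalized $f(z)=\frac{(1-z)^{-2\lambda}-1}{2\lambda}$; the paper's Case~II function as written has $f'(0)=-1$. Two things need fixing.

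\textbf{Step 3 slip.} Your value of $a_3$ at $c=2$ is wrong. There $p_1=p_2=2$, so
\[
a_3=\frac{1+2\lambda}{12}\bigl(2+2(1+2\lambda)\bigr)=\frac{(1+2\lambda)(1+\lambda)}{3}.
\]
This matters. With your value $\frac{(1+2\lambda)(3+2\lambda)}{12}$ one gets $\Phi(2)<1$ for every $\lambda\in[\frac12,1]$, and the case split at $(\sqrt{153}-5)/8$ disappears. With the correct value, $\Phi(2)=1+\frac{(1+2\lambda)^2(4\lambda^2+5\lambda-8)}{18}$.

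Your convexity argument does work. Put $s=1+2\lambda$ and $t=c^2$. The maximizing $x$ is never $-1$, because $(3-2\lambda)t\le 8$. For $t\le 8/s$ the maximizing $x$ is interior, giving $\Phi=(1-s^2t/16)^2$. For $t\ge 8/s$ it is $x=1$, giving $576\,\Phi=576-16s^2+(4s^3-72s^2)t+2s^4t^2$. Both pieces are convex in $t$ and have equal values and derivatives at $t=8/s$.

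\textbf{Step 4 is simpler than you expect.} You anticipate an interior $\cos\phi$-minimizer and a rational function of $t$; neither occurs. On $|\zeta|=1$ one has $|a_3|^2=A^2+B^2+2AB\cos\phi$. Hence $T_{3,1}$ is affine in $\cos\phi$, with slope
\[
2B(a_2^2-A)=\frac{s^2(2\lambda-1)t(4-t)}{576}\ge0 .
\]
This is the first place $\lambda\ge\frac12$ is used, and it forces $\zeta=-1$. What remains is the convex quadratic
\[
576\,h(t)=576-16s^2-4s^2(s+14)\,t+2s^2(s+2)(s-1)\,t^2 .
\]
Its vertex is $t^*=\frac{2\lambda+15}{2\lambda(2\lambda+3)}$. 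It lies in $(0,4]$ because
\[
4-t^*=\frac{(2\lambda-1)(8\lambda+15)}{2\lambda(2\lambda+3)}\ge0,
\]
which is the second use of $\lambda\ge\frac12$. Finally,
\[
h(t^*)=-\frac{(s-2)^2(s+4)^2}{32(s+2)(s-1)}=-\frac{(2\lambda-1)^2(2\lambda+5)^2}{64\lambda(2\lambda+3)},
\]
which is the stated lower bound. It is attained by $f'(z)=(1-cz+z^2)^{-(1/2+\lambda)}$ with $c=\sqrt{t^*}$, which is the paper's example.
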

\begin{theoE}\cite[Theorem 6.1]{ALT2022}
Let \(f\in\mathcal{F}_{O}(\lambda)\), where $\frac{1}{2}\le \lambda\le 1.$
Then
\[
|J_{2,3}(F)|\leq \frac{(1+2\lambda)(7+2\lambda)^{3/2}}{36\sqrt{3(9-4\lambda^2)}}.
\]
 The inequalities are sharp.
\end{theoE}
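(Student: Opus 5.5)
We regard $F$ as the function in $\mathcal{F}_{O}(\lambda)$ under consideration, written $F(z)=z+\sum_{m\ge2}a_mz^m$, so that $J_{2,3}(F)=a_2a_3-a_4$. The plan is to express $a_2,a_3,a_4$ through the coefficients of a Carath\'eodory function, use the standard Libera--Z\l otkiewicz type parametrization of $p_1,p_2,p_3$, and maximize an explicit real function over a compact set.

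\textbf{Step 1: coefficients.} Put $c=\lambda+\tfrac12\in[1,\tfrac32]$. The condition $\operatorname{Re}\bigl(1+zF''/F'\bigr)>\tfrac12-\lambda$ is equivalent to
\[
1+\frac{zF''(z)}{F'(z)}=\Bigl(\tfrac12-\lambda\Bigr)+c\,p(z),\qquad p\in\mathcal{P},
\]
that is, $zF''=c(p-1)F'$. Comparing the coefficients of $z,z^2,z^3$ gives
\[
2a_2=cp_1,\qquad 6a_3=c\,p_2+c^2p_1^2,\qquad 12a_4=c\bigl(p_3+2a_2p_2+3a_3p_1\bigr).
\]
Substituting these into $a_2a_3-a_4$ expresses $J_{2,3}(F)$ as a polynomial $\alpha p_1^3+\beta p_1p_2+\gamma p_3$. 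Here $\alpha,\beta,\gamma$ depend only on $c$, and $\gamma=-c/12$.

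\textbf{Step 2: parametrization.} The quantity $|J_{2,3}(F)|$ is invariant under rotations $F(z)\mapsto e^{-i\theta}F(e^{i\theta}z)$. Hence we may assume $p_1=2\zeta_1$ with $\zeta_1\in[0,1]$. We then use
\[
p_2=2\zeta_1^2+2(1-\zeta_1^2)\zeta_2,\qquad p_3=2\zeta_1^3+4(1-\zeta_1^2)\zeta_1\zeta_2-2(1-\zeta_1^2)\zeta_1\zeta_2^2+2(1-\zeta_1^2)(1-|\zeta_2|^2)\zeta_3,
\]
with $|\zeta_2|,|\zeta_3|\le1$. This turns $J_{2,3}(F)$ into an explicit expression $\Phi(\zeta_1,\zeta_2)+\gamma\cdot 2(1-\zeta_1^2)(1-|\zeta_2|^2)\zeta_3$. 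The endpoint cases $\zeta_1=0$ and $\zeta_1=1$ are handled separately and give small values.

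\textbf{Step 3: maximization (main obstacle).} The triangle inequality in $\zeta_3$ bounds $|J|$ by
\[
|\Phi(\zeta_1,\zeta_2)|+\tfrac{c}{6}(1-\zeta_1^2)(1-|\zeta_2|^2).
\]
Writing $\zeta_2=re^{i\psi}$, the term $\Phi$ is a quadratic polynomial in $\zeta_2$ with real coefficients depending on $\zeta_1$. The main difficulty is the joint maximization over $(\zeta_1,r,\psi)$.

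I would first maximize $|\Phi|$ in $\psi$; the coefficients are real and of controlled sign, so $\psi=0$ or $\psi=\pi$ should be extremal. After that I would maximize in $r\in[0,1]$, a concave-or-convex quadratic, obtaining either an interior critical point or the endpoint $r=1$. This leaves a one-variable function $g(\zeta_1)$. I expect the maximum to occur at an interior critical point $\zeta_1^*$ with $(\zeta_1^*)^2$ a rational function of $\lambda$, involving $7+2\lambda$ and $9-4\lambda^2=(3-2\lambda)(3+2\lambda)$. This would produce the stated value
\[
\frac{(1+2\lambda)(7+2\lambda)^{3/2}}{36\sqrt{3(9-4\lambda^2)}},
\]
where the $3/2$ power arises from evaluating a cubic at a square-root critical point. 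Some care is needed to check, for all $\lambda\in[\tfrac12,1]$, that this critical value dominates the boundary cases $r=1$ and $\zeta_1\in\{0,1\}$.

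\textbf{Step 4: sharpness.} I would take $p(z)=\dfrac{1+\ldots}{1-\ldots}$ built from the extremal parameters $\zeta_1=\zeta_1^*$, $\zeta_2=\pm1$ or $\zeta_2=r^*$, with $\zeta_3$ chosen so the triangle inequality is an equality. Integrating $F''/F'=c(p-1)/z$ then gives a function $F\in\mathcal{F}_{O}(\lambda)$ attaining the bound.
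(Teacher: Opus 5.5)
The paper does not prove Theorem E. It quotes it from \cite[Theorem 6.1]{ALT2022} and uses it as a black box in Theorems \ref{J1} and \ref{J2}. Your proposal is therefore a different, self-contained route, and it is correct: the points you leave as expectations in Step 3 all check out.

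With $c=\lambda+\tfrac12$, your Step 1 gives $J_{2,3}=\tfrac{c}{24}\bigl(c^2p_1^3-cp_1p_2-2p_3\bigr)$. With $\zeta_1\in[0,1]$, the parametrization then yields
\[
J_{2,3}=\frac{c}{6}\Bigl[(2c+1)(c-1)\zeta_1^3-(c+2)\zeta_1(1-\zeta_1^2)\zeta_2+\zeta_1(1-\zeta_1^2)\zeta_2^2-(1-\zeta_1^2)(1-|\zeta_2|^2)\zeta_3\Bigr].
\]
The remaining steps go through as follows.

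\textbf{Choice of $\psi$.} Because $c\ge1$, the coefficients of $1,\zeta_2,\zeta_2^2$ have signs $+,-,+$. So $\zeta_2=-r$ gives equality in the triangle inequality.

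\textbf{Maximization in $r$.} The resulting quadratic in $r$ is concave, with vertex $\frac{(c+2)\zeta_1}{2(1-\zeta_1)}$. This is $\ge1$ exactly when $\zeta_1\ge\frac{2}{c+4}$.

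\textbf{Maximization in $\zeta_1$.} On $\zeta_1\ge\frac{2}{c+4}$ you maximize the concave function $\frac{c}{6}\bigl[(c+3)\zeta_1-2(2-c)(c+1)\zeta_1^3\bigr]$. Its critical point is $(\zeta_1^*)^2=\frac{c+3}{6(2-c)(c+1)}=\frac{7+2\lambda}{3(9-4\lambda^2)}$, which lies in $[\frac{2}{c+4},1]$ for all $c\in[1,\tfrac32]$. The value there, $\frac{c(c+3)}{9}\zeta_1^*$, is exactly the stated bound. For $\zeta_1<\frac{2}{c+4}$ the reduced function is increasing and agrees with the other branch at the junction, so it is dominated. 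The endpoints $\zeta_1=0,1$ need no separate treatment.

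\textbf{Sharpness.} At the extremum $\zeta_2=-1$, so $\zeta_3$ drops out. Then $p(z)=\frac{1-z^2}{1-2\zeta_1^*z+z^2}$ and $F'(z)=(1-2\zeta_1^*z+z^2)^{-(1/2+\lambda)}$. This is the function in the paper's sharpness example with $u_0=2\zeta_1^*$; the extra $\lambda$ under the root in the paper's $u_0$ is a misprint.

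\textbf{Comparison of routes.} Citing \cite{ALT2022} is short. The optimization you do by hand is also what the Prokhorov--Szynal lemma packages. In Schwarz coefficients, $J_{2,3}=-\frac{c}{6}\bigl(w_3+(c+2)w_1w_2-(2c+1)(c-1)w_1^3\bigr)$, and this $(\mu,\nu)$ lies in the region where that lemma gives the bound $\frac23(|\mu|+1)\sqrt{\frac{|\mu|+1}{3(|\mu|+1+\nu)}}$. Your argument is elementary and makes the extremal function explicit. It also yields the correct expansion $J_{2,3}=\frac{(1+2\lambda)^3}{192}p_1^3-\frac{(1+2\lambda)^2}{96}p_1p_2-\frac{1+2\lambda}{24}p_3$. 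The expansion displayed in the proof of Theorem \ref{J2} has different, incorrect coefficients for $p_1p_2$ and $p_3$.
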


Many authors have investigated various subclasses of biholomorphic mappings in Banach spaces and their associated coefficient problems. Significant contributions in this direction can be found in \cite{Chirila2014, XL2009,GHK2002,Kohr1998}, \cite{GHKK2017}--\cite{HKK2021}.

\medskip

The above results naturally raise the following question: 
\begin{ques}Can the sharp bounds for the second Hankel determinant, Toeplitz determinants, Hermitian--Toeplitz determinants, and the Zalcman functional obtained for functions in $\mathcal{F}_{O}(\lambda)$ be extended to the setting of normalized biholomorphic mappings on complex Banach spaces?
\end{ques}
The primary objective of the present paper is to answer this question by establishing Banach space analogues of these determinant estimates.

The paper is organized into six sections. In Section~2, we establish several
auxiliary lemmas that are required for the proofs of our main results.
In Section~3, we determine the second Hankel determinant
$H_{2}(2)(f)$ for functions belonging to the class
$\widehat{\mathcal{F}}_{\lambda}(\mathbb{B})$. In Section~4, we investigate
the Toeplitz determinants $T_{3}(1)(f)$ and $T_{3}(2)(f)$ for the class
$\widehat{\mathcal{F}}_{\lambda}(\mathbb{B})$. In Section~5, we obtain
estimates for the Hermitian--Toeplitz determinant $T_{3,1}(f)$ for functions
in the class $\widehat{\mathcal{F}}_{\lambda}(\mathbb{B})$. Finally, in
Section~6, we study the Zalcman functional $J_{2,3}(F)$ for functions
belonging to the class $\widehat{\mathcal{F}}_{\lambda}(\mathbb{B})$.
\section{{\bf Lemmas}}
Before proving of the main theorem, we establish the following auxiliary lemmas. 
\begin{lem}\label{L0}\cite[Lemma 2.4]{ALT2022}
Suppose that $f\in\mathcal{F}_{0}(\lambda)$ and is given by \eqref{eq:A}. Then
\begin{equation}\label{eq:coefficients}
\begin{aligned}
a_2 &= \frac{1}{4}p_1(1+2\lambda),\\[2mm]
a_3 &= \frac{1}{12}\left((1+2\lambda)p_2+\frac{1}{2}(1+2\lambda)^2p_1^2\right),\\[2mm]
a_4 &= \frac{1}{24}\left((1+2\lambda)p_3
+\frac{3}{4}(1+2\lambda)^2p_1p_2
+\frac{1}{8}(1+2\lambda)^3p_1^3\right),
\end{aligned}
\end{equation}
where $p_1$, $p_2$, and $p_3$ are given by \eqref{eq:P}.
\end{lem}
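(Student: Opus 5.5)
The plan is to turn the defining inequality of $\mathcal{F}_{0}(\lambda)$ into a Carathéodory function and then compare coefficients. Set $\beta=\frac12-\lambda$, so that $1-\beta=\frac12+\lambda=\frac{1+2\lambda}{2}$, and define
\[
p(z)=\frac{1}{1-\beta}\left(1+\frac{zf''(z)}{f'(z)}-\beta\right),\qquad z\in\mathbb{U}.
\]
Because $f$ is locally univalent, $f'$ does not vanish, so $p$ is analytic in $\mathbb{U}$. It satisfies $p(0)=1$, and \eqref{eq:Flambda} gives $\operatorname{Re}p>0$. Hence $p\in\mathcal{P}$ with the expansion \eqref{eq:P}. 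Clearing denominators, we get the identity
\[
f'(z)+zf''(z)=f'(z)\bigl(\beta+(1-\beta)p(z)\bigr)=f'(z)\Bigl(1+\tfrac{1+2\lambda}{2}\sum_{m\ge1}p_mz^m\Bigr).
\]

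Next, substitute $f(z)=z+\sum a_mz^m$. Then $f'(z)+zf''(z)=(zf'(z))'=\sum_{m\ge1}m^2a_mz^{m-1}$ with $a_1=1$, while $f'(z)=\sum_{m\ge1} m a_m z^{m-1}$. Writing $c=\frac{1+2\lambda}{2}$ and comparing the coefficients of $z^{m-1}$ for $m=2,3,4$ gives the triangular system
\[
m(m-1)a_m = c\sum_{k=1}^{m-1} k a_k\, p_{m-k}.
\]
Explicitly:
\begin{itemize}
\item for $m=2$: $2a_2=cp_1$;
\item for $m=3$: $6a_3=c(p_2+2a_2p_1)$;
\item for $m=4$: $12a_4=c(p_3+2a_2p_2+3a_3p_1)$.
\end{itemize}

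Solving successively, the first equation gives $a_2=\frac{c}{2}p_1=\frac{1+2\lambda}{4}p_1$. Substituting into the second gives
\[
a_3=\frac{c}{6}p_2+\frac{c^2}{6}p_1^2=\frac{1}{12}\Bigl((1+2\lambda)p_2+\tfrac12(1+2\lambda)^2p_1^2\Bigr).
\]
For $a_4$, we insert $a_2$ and $a_3$ into the third equation and collect the $p_1p_2$ terms. The term $2a_2p_2$ contributes $cp_1p_2$, and $3a_3p_1$ contributes $\frac{c}{2}p_1p_2$. Together with the outer factor $c$, this gives $\frac{3c^2}{2}p_1p_2$. The $p_1^3$ term is $\frac{c^3}{2}p_1^3$. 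Dividing by $12$ and converting $c=\frac{1+2\lambda}{2}$ yields
\[
a_4=\frac{1}{24}\Bigl((1+2\lambda)p_3+\tfrac34(1+2\lambda)^2p_1p_2+\tfrac18(1+2\lambda)^3p_1^3\Bigr).
\]

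There is no real obstacle here; the only care needed is the bookkeeping in the $a_4$ step. The factors of $c$ and the powers of $2$ must be tracked so that the stated constants $\frac34$ and $\frac18$ come out correctly.
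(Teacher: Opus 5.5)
Your proof is correct. Writing $1+zf''(z)/f'(z)=\frac{1-2\lambda}{2}+\frac{1+2\lambda}{2}\,p(z)$ with $p\in\mathcal{P}$ and comparing coefficients in $(zf')'-f'=\frac{1+2\lambda}{2}\,f'\sum_{m\ge1}p_mz^m$ gives exactly the stated $a_2$, $a_3$, $a_4$, and your bookkeeping for $a_4$ checks out. The paper does not reprove this lemma; it cites it from Allu, Lecko and Thomas. Your argument is the standard one, and it is the same Carath\'eodory-substitution-and-coefficient-comparison scheme the paper itself runs in the Banach-space setting in Lemma~\ref{L4} via \eqref{kp1}.
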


\begin{lem}\label{L1}
Let \(u\in X\) with \(\|u\|=1\) and let \(T_u\in T(u)\). Define $F(x)=\frac{f(T_u(x))}{T_u(x)}\,x,$ $x\in\mathbb{B},$
where \(f\in\mathcal{S}\). If \(f\) is a normalized Ozaki close to convex function on \(\mathbb{U}\), then \(F\) is a Ozaki close to convex  mapping of type \(B\) on \(\mathbb{B}\).
\end{lem}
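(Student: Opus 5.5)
The plan is a direct computation. Write $g(z)=f(z)/z$, extended by $g(0)=1$. Since $f\in\mathcal{S}$, $g$ is holomorphic and zero-free on $\mathbb{U}$. Then $F(x)=g(T_u(x))\,x$ is a well-defined holomorphic mapping on $\mathbb{B}$; it is well defined because $|T_u(x)|\le\|x\|<1$. I will check normalization and local biholomorphicity, and then show that the quantity in the definition of $\widehat{\mathcal{F}}_{\lambda}(\mathbb{B})$ collapses to the one-variable expression $1+tf''(t)/f'(t)$ with $t=T_u(x)$.

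\textbf{Step 1: derivatives and normalization.} Put $t=T_u(x)$. Differentiating gives
\[
DF(x)y=g(t)\,y+g'(t)\,T_u(y)\,x,\qquad D^2F(x)(x,x)=\bigl(2t\,g'(t)+t^2g''(t)\bigr)x .
\]
Hence $F(0)=0$ and $DF(0)=g(0)I=I$.

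\textbf{Step 2: inverting $DF(x)$.} To solve $DF(x)y=z$, apply $T_u$ to both sides. This gives $T_u(y)\bigl(g(t)+tg'(t)\bigr)=T_u(z)$, and $g+tg'=f'(t)$. It follows that
\[
(DF(x))^{-1}z=\frac{1}{g(t)}\Bigl(z-\frac{g'(t)\,T_u(z)}{f'(t)}\,x\Bigr).
\]
This is a bounded inverse because $g(t)\neq0$ and $f'(t)\neq0$; the latter holds since $f$ is univalent (and locally univalent by assumption). So $F$ is locally biholomorphic. In particular, $(DF(x))^{-1}x=x/f'(t)$, which is the formula from Step 1 specialized to $y=x$, where $DF(x)x=f'(t)x$.

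\textbf{Step 3: reducing the defining expression.} From Step 1,
\[
D^2F(x)(x,x)+DF(x)x=w\,x,\qquad w=2tg'+t^2g''+g+tg' .
\]
Using $f'=g+tg'$ and $f''=2g'+tg''$, this becomes $w=f'(t)+tf''(t)$. Therefore
\[
(DF(x))^{-1}\bigl(D^2F(x)(x,x)+DF(x)x\bigr)=\Bigl(1+\frac{tf''(t)}{f'(t)}\Bigr)x .
\]
Applying any $T_x\in T(x)$ and dividing by $\|x\|$ gives
\[
T_x\!\left(\frac{(DF(x))^{-1}\bigl(D^2F(x)(x,x)+DF(x)x\bigr)}{\|x\|}\right)=1+\frac{tf''(t)}{f'(t)},\qquad t=T_u(x)\in\mathbb{U}.
\]
Its real part is at least $\frac12-\lambda$ by \eqref{eq:Flambda}. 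When $t=0$ the value is $1>\frac12-\lambda$, so that case is covered as well.

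\textbf{Expected difficulty.} The only delicate point is Step 2: verifying the inverse formula and noting that it needs both $g(t)\ne0$ and $f'(t)\ne0$. The rest is bookkeeping, with care that $g$ is analytic at $0$, so that nothing degenerates on the hyperplane $T_u(x)=0$.
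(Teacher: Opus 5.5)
Your proof is correct and follows essentially the same route as the paper: write $F(x)=g(T_u(x))\,x$ (the paper's $h(x)x$), invert $DF(x)$ via the rank-one correction formula, and reduce the defining expression to $\bigl(1+tf''(t)/f'(t)\bigr)x$ with $t=T_u(x)$. Your version is a bit more careful than the paper's: you extend $g$ by $g(0)=1$ across the hyperplane $T_u(x)=0$, and you explicitly verify normalization and local biholomorphicity via $g(t)\neq0$ and $f'(t)\neq0$, which the paper leaves implicit.
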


\begin{proof}
Since $f$ is a normalized Ozaki close to convex  function on $\mathbb{U}$, we have
\begin{equation}
\operatorname{Re}\left(1+\frac{\xi f''(\xi)}{f'(\xi)}\right)>\frac{1}{2}-\lambda, \frac{1}{2}\leq \lambda \leq 1
\qquad \xi\in\mathbb{U}.
\label{eq:spiral}
\end{equation}

For convenience, let $h(x)=\frac{f(T_u(x))}{T_u(x)},\; x\in\mathbb{B}.$ It follows immediately from the definition of $F$ that $F(x)=h(x)x.$

The first and second order Fr\'echet derivatives of $F$ are given by
\[
DF(x)(x)=h(x)x+Dh(x)(x)\,x\;\text{and}\; D^2F(x)(x,x)=D^2h(x)(x,x)x+2Dh(x)(x)\qquad x\in X.
\]
From above two equations we have 
\bea\label{le.1}
D^2F(x)(x,x)+DF(x)x
=
\left(h(x)+3Dh(x)x+D^2h(x)(x,x)\right)x,
\eea
Moreover, by the chain rule for Fr\'echet derivatives, we obtain
\bea\label{mmm1}
&& h(x)+Dh(x)x=f'(T_u(x))\\
\label{mmm2}\text{and}\;&& h(x)+3Dh(x)x+D^2h(x)(x,x)=T_u(x)f''(T_u(x))+f'(T_u(x)).
\eea

A straightforward calculation yields
\bea\label{le.2}
(DF(x))^{-1}\eta
=
\frac1{h(x)}
\left(
\eta-\frac{(Dh(x)\eta) x}
{h(x)+Dh(x)x}
\right).
\eea
Substituting $\eta=D^2F(x)(x,x)+DF(x)x$ into \eqref{le.2} and combining the resulting expression with \eqref{le.1}, we obtain

\bea\label{le.3}
&&(DF(x))^{-1}(D^2F(x)(x,x)+DF(x)x)\nonumber\\
&=&
\frac{1}{h(x)}\left((D^2F(x)(x,x)+DF(x)x)-\frac{Dh(x)(D^2F(x)(x,x)+DF(x)x)x}
{h(x)+Dh(x)x}\right)\nonumber\\
&=&\frac{1}{h(x)}\bigg(D^2h(x)(x,x)x+3Dh(x)(x)+h(x)x
\nonumber\\
&&\quad\quad-\frac{Dh(x)(D^2h(x)(x,x)x+3Dh(x)(x)+h(x)x)}
{h(x)+Dh(x)x}x\bigg).\eea

Since \(h(x)+3Dh(x)x+D^2h(x)(x,x)\) is a scalar say $H(x)$, it follows that
\[
Dh(x)H(x)x
=H(x)Dh(x)x.
\]
Hence from (\ref{le.3}) we get
\bea\label{kkk1}
&&(DF(x))^{-1}\left(D^2F(x)(x,x)+DF(x)x\right)\nonumber\\
&=&
\frac{h(x)+3Dh(x)x+D^2h(x)(x,x)}{h(x)}
\left(
1-\frac{Dh(x)x}{h(x)+Dh(x)x}
\right)x\nonumber\\
&=&
\frac{h(x)+3Dh(x)x+D^2h(x)(x,x)}
{h(x)+Dh(x)x}\,x.
\eea
Applying the identities \eqref{mmm1} and \eqref{mmm2} to \eqref{kkk1}, we obtain
\[
(DF(x))^{-1}\left(D^2F(x)(x,x)+DF(x)x\right)
=\left(1+\frac{T_u(x)f''(T_u(x))}{f'(T_u(x))}\right)x.
\]
Therefore,
\bea\label{f1}
T_x\!\left((DF(x))^{-1}\left(D^2F(x)(x,x)+DF(x)x\right)\right)=\left(1+\frac{T_u(x)f''(T_u(x))}{f'(T_u(x))}\right)\|x\|.
\eea

Combining \eqref{f1} with \eqref{eq:spiral}, we deduce that
\beas
&&\operatorname{Re}\left(T_x\!\left(\frac{(DF(x))^{-1}\left(D^2F(x)(x,x)+DF(x)x\right)}{\|x\|}\right)\right)\\
&=&\operatorname{Re}\;\left(1+\frac{T_u(x)f''(T_u(x))}{f'(T_u(x))}\right)>\frac{1}{2}-\lambda,\;\;\frac{1}{2}\leq \lambda\leq 1,
\eeas
for $x\in \mathbb{B}\setminus\{0\}$ Hence, by the definition of Ozaki close to convex mapping of type \(B\) on \(\mathbb{B}\), $F$ is a Ozaki close to convex mapping of type \(B\) on \(\mathbb{B}\).
\end{proof}

\begin{lem}\label{L2}
Suppose that $g\in H(\mathbb{B},\mathbb{C})$ with $g(0)=1$, and define $F(x)=g(x)x, x\in\mathbb{B}.$ Fix $x_{0}\in\partial\mathbb{B}$ and let
$f(\xi)=g(\xi x_{0})\,\xi,\xi\in\mathbb{U}.$
Then,
\[
f\in\widehat{\mathcal{F}}_{O}(\lambda)
\quad\Longleftrightarrow\quad
F\in\widehat{\mathcal{F}}_{\lambda}(\mathbb{B}).
\]
\end{lem}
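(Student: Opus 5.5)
We reduce the defining expression of $\widehat{\mathcal{F}}_{\lambda}(\mathbb{B})$ for $F(x)=g(x)x$ to the one-variable Ozaki expression of $f$, evaluated along complex lines through the origin. This is the computation in the proof of Lemma~\ref{L1}, with $h$ replaced by $g$. That computation never used the special form $h=f(T_u(\cdot))/T_u(\cdot)$ until the final step, so for $F(x)=g(x)x$ it gives
\[
(DF(x))^{-1}\bigl(D^2F(x)(x,x)+DF(x)x\bigr)=\frac{g(x)+3Dg(x)x+D^2g(x)(x,x)}{g(x)+Dg(x)x}\,x .
\]
This requires $g(x)\neq 0$ and $g(x)+Dg(x)x\neq0$. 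Both hold on either side of the equivalence: they are equivalent to local biholomorphy of $F$, and also to $f(\xi)/\xi\ne0$, $f'(\xi)\ne0$ along every line. Applying $T_x$ and dividing by $\|x\|$, the functional in the definition becomes exactly the scalar fraction above, independently of the choice of $T_x\in T(x)$.

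Next we relate this scalar to $f$. Any $x\in\mathbb{B}\setminus\{0\}$ can be written $x=\xi x_0$ with $x_0=x/\|x\|\in\partial\mathbb{B}$ and $\xi=\|x\|\in(0,1)$. Conversely, for each $x_0\in\partial\mathbb{B}$ and $\xi\in\mathbb{U}\setminus\{0\}$, the point $\xi x_0$ lies in $\mathbb{B}$. Put $\varphi(\xi)=g(\xi x_0)$. Then $\xi\varphi'(\xi)=Dg(\xi x_0)(\xi x_0)$ and $\xi^2\varphi''(\xi)=D^2g(\xi x_0)(\xi x_0,\xi x_0)$. Since $f(\xi)=\xi\varphi(\xi)$, we have $f'(\xi)=\varphi+\xi\varphi'$ and $\xi f''(\xi)=2\xi\varphi'+\xi^2\varphi''$. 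Hence
\[
1+\frac{\xi f''(\xi)}{f'(\xi)}=\frac{\varphi+3\xi\varphi'+\xi^2\varphi''}{\varphi+\xi\varphi'}
=\frac{g(x)+3Dg(x)x+D^2g(x)(x,x)}{g(x)+Dg(x)x}\Big|_{x=\xi x_0}.
\]
This is the analogue of \eqref{mmm1}--\eqref{mmm2}.

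The two implications then follow.

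\textbf{($\Leftarrow$)} Assume $F\in\widehat{\mathcal{F}}_{\lambda}(\mathbb{B})$. For $\xi\in\mathbb{U}\setminus\{0\}$, the point $x=\xi x_0$ is nonzero, so the displayed identities give $\operatorname{Re}(1+\xi f''/f')\ge\frac12-\lambda$. At $\xi=0$ the expression equals $1>\frac12-\lambda$. This yields the Ozaki condition for $f$. If strict inequality is required, it follows from the minimum principle for the harmonic function $\operatorname{Re}(1+\xi f''/f')$, which takes the value $1$ at the origin, or by openness after the usual normalization.

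\textbf{($\Rightarrow$)} Assume the Ozaki condition holds for $f$. Here is the main obstacle: as literally stated, the lemma fixes a single $x_0$. A condition on one slice cannot control $F$ at points $x$ not on the line $\mathbb{C}x_0$. I would therefore read the hypothesis as holding for every $x_0\in\partial\mathbb{B}$, which is the standard formulation of such lemmas in Graham--Kohr. With that reading, given $x\ne0$ we take $x_0=x/\|x\|$ and $\xi=\|x\|$, and the identities above deliver the defining inequality of $\widehat{\mathcal{F}}_{\lambda}(\mathbb{B})$ at $x$.

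Local biholomorphy of $F$ needs separate care. The inverse formula \eqref{le.2} shows that $DF(x)$ is invertible, with bounded inverse, exactly when $g(x)\ne0$ and $g(x)+Dg(x)x\ne0$. These are the conditions $f(\xi)/\xi\ne0$ and $f'(\xi)\ne0$ on each slice. They hold because $f$ is locally univalent, and $f(\xi)/\xi\neq0$ because close-to-convex functions are univalent.
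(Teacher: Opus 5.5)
Your proposal is correct and follows the paper's own route: you write $(DF(x))^{-1}\bigl(D^2F(x)(x,x)+DF(x)x\bigr)$ as the scalar $\frac{g(x)+3Dg(x)x+D^2g(x)(x,x)}{g(x)+Dg(x)x}$ times $x$, and you identify that scalar with $1+\xi f''(\xi)/f'(\xi)$ at $x=\xi x_0$. Your objection to the single-$x_0$ formulation is valid: the paper's converse passes without comment from $f'\neq0$ on one slice to $g(x)+Dg(x)x\neq0$ on all of $\mathbb{B}$. Your checks that $g\neq0$ (via univalence) and that $\ge$ upgrades to $>$ (via the minimum principle) also supply steps the paper omits.
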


\begin{proof}
Assume first that $F\in\widehat{\mathcal{F}}_{\lambda}(\mathbb{B}).$ Then $F$ is locally biholomorphic on $\mathbb{B}$. Consequently,
\[
g(x)\neq0,
\qquad
g(x)+Dg(x)x\neq0,
\qquad x\in\mathbb{B}.
\]

Since
\[
DF(x)(x)=g(x)x+Dg(x)(x)\,x,
\]
a direct computation gives
\begin{equation}
[DF(x)]^{-1}=\frac1{g(x)}\left(I-\frac{xDg(x)}{g(x)+Dg(x)x}\right).
\label{eq:inverse}
\end{equation}
Since $F(x)=g(x)x,$ we have
\[
DF(x)h=g(x)h+Dg(x)(h)x,\;\;\text{and}\;\;
D^2F(x)(h,k)=D^2g(x)(h,k)x+Dg(x)(h)k+Dg(x)(k)h.
\]

Therefore,
\[
D^2F(x)(x,x)=D^2g(x)(x,x)x+2Dg(x)(x)x,
\]
and hence
\[D^2F(x)(x,x)+DF(x)x=\bigl(g(x)+3Dg(x)x+D^2g(x)(x,x)\bigr)x.
\]

Substituting this identity into \eqref{eq:inverse}, we obtain
\[
\begin{aligned}
&(DF(x))^{-1}\bigl(D^2F(x)(x,x)+DF(x)x\bigr) \\
&=\frac{g(x)+3Dg(x)x+D^2g(x)(x,x)}{g(x)}\left(I-\frac{xDg(x)}{g(x)+Dg(x)x}
\right)x.
\end{aligned}
\]

Since
\[\left(I-\frac{xDg(x)}{g(x)+Dg(x)x}\right)x=x-\frac{Dg(x)x}{g(x)+Dg(x)x}x
=\frac{g(x)}{g(x)+Dg(x)x}x,\]
it follows that
\[
\begin{aligned}
(DF(x))^{-1}\bigl(D^2F(x)(x,x)+DF(x)x\bigr)
&=\frac{g(x)+3Dg(x)x+D^2g(x)(x,x)}{g(x)+Dg(x)x}\,x \\
&=\left(1+\frac{2Dg(x)x+D^2g(x)(x,x)}{g(x)+Dg(x)x}\right)x.
\end{aligned}
\]
Substituting $x=\xi x_0$ into above equation, we obtain
\[
\begin{aligned}
&(DF(\xi x_0))^{-1}\bigl(D^2F(\xi x_0)(\xi x_0,\xi x_0)+DF(\xi x_0)\xi x_0\bigr)\\
&=\left(1+\frac{2Dg(\xi x_0)\;\xi x_0+D^2g(\xi x_0)(\xi x_0,\xi x_0)}
{g(\xi x_0)+Dg(\xi x_0)\;\xi x_0}
\right)\xi x_0.
\end{aligned}
\]

Applying $T_{\xi x_0}$, we get
\[
\begin{aligned}&e^{i\beta}T_{\xi x_0}\left((DF(\xi x_0))^{-1}\bigl(D^2F(\xi x_0)(\xi x_0,\xi x_0)+DF(\xi x_0)\xi x_0\bigr)\right)\\
&=e^{i\beta}\left(1+\frac{2Dg(\xi x_0)\;\xi x_0+D^2g(\xi x_0)(\xi x_0,\xi x_0)}{g(\xi x_0)+Dg(\xi x_0)\;\xi x_0}\right)T_{\xi x_0}(\xi x_0).
\end{aligned}
\]

Since $T_{\xi x_0}(\xi x_0)=|\xi|,$ and from $f(\xi)=g(\xi x_{0})\xi$ we have 
\[
f'(\xi)=g(\xi x_0)+Dg(\xi x_0),\xi x_0\;\text{and}\;f''(\xi)=2Dg(\xi x_0)(x_0)+\xi D^2g(\xi x_0)(x_0,x_0),
\]
we have
\[
2Dg(\xi x_0)\;\xi x_0+D^2g(\xi x_0)(\xi x_0,\xi x_0)=\xi f''(\xi).
\]
Hence,
\bea\label{kkkk1}
T_{\xi x_0}\left(\frac{(DF(\xi x_0))^{-1}\bigl(D^2F(\xi x_0)(\xi x_0,\xi x_0)+DF(\xi x_0)\xi x_0\bigr)}{\|\xi x_0\|}
\right)
=1+\frac{\xi f''(\xi)}{f'(\xi)}.
\eea

Since $F\in\widehat{\mathcal{F}}_{\lambda}(\mathbb{B}),$
it follows that
\beas\label{kkkk1}
\operatorname{Re}\;\left\{1+\frac{\xi f''(\xi)}{f'(\xi)}\right\}>\frac{1}{2}-\lambda
\eeas
and hence $f\in\mathcal{F}_{O}(\lambda).$\\

\medskip

Conversely, suppose that $f\in\mathcal{F}_{O}(\lambda).$ The definition of the class \(\mathcal F_0(\lambda)\) includes that \(f\) is locally univalent in \(\mathbb U\). Hence $f'(\xi)\neq0,\qquad |\xi|<1.$

Therefore,
\[g(x)+Dg(x)x\neq0,
\qquad x\in\mathbb{B}.
\]
By \eqref{eq:inverse}, $DF(x)$ is invertible for every $x\in\mathbb{B}$, and therefore $F$ is locally biholomorphic.

Finally, using \eqref{kkkk1} together with
\[
\operatorname{Re} \left(1+\frac{\xi f''(\xi)}{f'(\xi)}\right)>\frac{1}{2}-\lambda,
\]
we obtain
\[
\operatorname{Re}\left(T_x\left(\frac{(DF(x))^{-1}\bigl(D^2F(x)(x,x)+DF(x)x\bigr)}{\|x\|}\right)\right)>\frac{1}{2}-\lambda, \qquad x\in\mathbb{B}\setminus\{0\}.
\]
Hence, $F\in\widehat{\mathcal{F}}_{\lambda}(\mathbb{B}).$

This completes the proof.
\end{proof}
\begin{lem}\label{L3}  Let $g\in H(\mathbb{B},\mathbb{C})$ satisfy $g(0)=1$, and define $F(x)=g(x)x, x\in\mathbb{B}.$ Suppose that $F\in\widehat{\mathcal{F}}_{\lambda}(\mathbb{B})$. Then, for every
$x_{0}\in X$ with $\|x_{0}\|=1$,
\bea\label{A234}
\begin{cases} A_{2}
&=
\frac{1}{2!}
T_{x_{0}}
\left(
D^{2}F(0)(x_{0}^{2})
\right)
=\frac{1}{4}p_1(1+2\lambda),\\
A_{3}
&=
\frac{1}{3!}
T_{x_{0}}
\left(
D^{3}F(0)(x_{0}^{3})
\right)
=\frac{1}{12}(1+2\lambda)\left(p_2+\frac{1}{2}p_1^2(1+2\lambda)\right),
\\
\text{and}\;\; 
A_{4}
&=
\frac{1}{4!}T_{x_{0}}\left(D^{4}F(0)(x_{0}^{4})\right)=\frac{1}{24}(1+2\lambda)\left(p_3+\frac{3}{4}p_1p_2(1+2\lambda)+\frac{1}{8}p_1^3(1+2\lambda)^2\right),
\end{cases}
\eea
where $p_1$, $p_2$ and $p_3$ are given by (\ref{eq:P}).

\end{lem}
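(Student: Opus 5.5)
The plan is to reduce the statement to the one-variable coefficient formulas of Lemma \ref{L0}, using the slice function supplied by Lemma \ref{L2}. Fix $x_0\in X$ with $\|x_0\|=1$ and $T_{x_0}\in T(x_0)$. Define
\[
f(\xi)=g(\xi x_0)\,\xi,\qquad \xi\in\mathbb{U}.
\]
Since $F=g(x)x\in\widehat{\mathcal{F}}_{\lambda}(\mathbb{B})$, Lemma \ref{L2} shows that $f\in\mathcal{F}_O(\lambda)$. Write $f(\xi)=\xi+\sum_{m\ge2}a_m\xi^m$. Lemma \ref{L0} then provides $p\in\mathcal{P}$ such that $a_2,a_3,a_4$ are given by \eqref{eq:coefficients} in terms of $p_1,p_2,p_3$. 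It therefore suffices to prove
\[
A_n=a_n,\qquad n=2,3,4 .
\]

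To prove this identity, expand $g$ at the origin:
\[
g(x)=1+\sum_{k\ge1}\frac{1}{k!}D^kg(0)(x^k).
\]
Then
\[
F(x)=x+\sum_{k\ge1}\frac{1}{k!}D^kg(0)(x^k)\,x .
\]
By uniqueness of the homogeneous expansion, the $n$-homogeneous part of $F$ is $\frac{1}{n!}D^nF(0)(x^n)=\frac{1}{(n-1)!}D^{n-1}g(0)(x^{n-1})\,x$. Substituting $x=x_0$ and applying $T_{x_0}$, the normalization $T_{x_0}(x_0)=\|x_0\|=1$ gives
\[
A_n=\frac{1}{(n-1)!}D^{n-1}g(0)(x_0^{n-1}).
\]
On the other side, $g(\xi x_0)=1+\sum_{k\ge1}\frac{\xi^k}{k!}D^kg(0)(x_0^k)$, so
\[
f(\xi)=\xi+\sum_{k\ge1}\frac{1}{k!}D^kg(0)(x_0^k)\,\xi^{k+1}.
\]
Comparing coefficients yields $a_n=\frac{1}{(n-1)!}D^{n-1}g(0)(x_0^{n-1})=A_n$. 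Combining this with Lemma \ref{L0} gives the three formulas in \eqref{A234}.

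The main point requiring care is matching the homogeneous expansion of $F$ with that of $f$, which is where uniqueness of homogeneous polynomial expansions and symmetry of $D^kg(0)$ enter. The step that depends on the choice of scaling is that $T_{x_0}(x_0)=1$, and this holds because $\|x_0\|=1$. Note also that $p$ depends on the chosen direction $x_0$; this is consistent with the statement, where $p_1,p_2,p_3$ are the coefficients of the Carathéodory function attached to $f$.

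Finally, Lemma \ref{L2} requires $f$ to be locally univalent. This follows from local biholomorphicity of $F$, since $f'(\xi)=g(\xi x_0)+Dg(\xi x_0)\xi x_0\neq0$, as observed in the proof of that lemma.
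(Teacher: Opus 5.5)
Your proposal is correct and follows the paper's proof. You use the slice $f(\xi)=g(\xi x_0)\xi$, apply Lemma~\ref{L2} to get $f\in\mathcal{F}_{O}(\lambda)$, identify $A_n=a_n$, and invoke Lemma~\ref{L0}; the only cosmetic difference is that the paper reads off $A_n=a_n$ directly from $f(\xi)=T_{x_0}(F(\xi x_0))$, whereas you compare homogeneous expansions through $g$.
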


 \begin{proof}
Fix $x_{0}\in\partial\mathbb{B}$ and define $f(\xi)=g(\xi x_{0})\,\xi, \xi\in\mathbb{U}.$
Since $F\in\widehat{\mathcal{F}}_{\lambda}(\mathbb{B})$, Lemma~\ref{L2}
implies that $f\in \mathcal{F}_{0}(\lambda).$

On the other hand,
\[
f(\xi)
=
T_{x_{0}}\bigl(F(\xi x_{0})\bigr),
\]
and hence
\[
a_{2}
=
\frac{f''(0)}{2!}
=
\frac{1}{2!}
T_{x_{0}}
\left(
D^{2}F(0)(x_{0}^{2})
\right)
=
A_{2},
\]
\[
a_{3}
=
\frac{f^{(3)}(0)}{3!}
=
\frac{1}{3!}
T_{x_{0}}
\left(
D^{3}F(0)(x_{0}^{3})
\right)
=
A_{3},
\]
and \[
a_{4}
=
\frac{f^{(4)}(0)}{4!}
=
\frac{1}{4!}
T_{x_{0}}
\left(
D^{4}F(0)(x_{0}^{4})
\right)
=
A_{4}.
\]
Applying Lemma~\ref{L0}, we obtain the desired result.
\end{proof}
\begin{lem}\label{L4} Let $F$ be a locally biholomorphic mapping on $\mathbb{B}$ and $F$ satisfy the following assumptions:
\begin{equation}\label{cc1}
\frac{D^{k+1}F(0)\left(x^{k+1}\right)}{(k+1)!}
= H_{F,k}(x)x,\qquad x\in X,\quad k=1,2,3.
\end{equation}
where $H_{F,k}(x)$ is a homogeneous polynomial of degree $k$ with values in $\mathbb{C}$. If $F\in \widehat{\mathcal{F}}_{\lambda}(\mathbb{B})$, then for each $x_0\in X$ with $\|x_0\|=1$,  the quantities $A_2$, $A_3$ and $A_4$ are given by  (\ref{A234}).
\end{lem}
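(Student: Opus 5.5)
Fix $x_0\in X$ with $\|x_0\|=1$ and $T_{x_0}\in T(x_0)$. The plan is to reduce Lemma~\ref{L4} to the one-variable situation through the slice function
\[
f(\xi)=T_{x_0}\bigl(F(\xi x_0)\bigr),\qquad \xi\in\mathbb{U},
\]
show that $f\in\mathcal{F}_{O}(\lambda)$, and then invoke Lemma~\ref{L0}. Since $\frac{1}{n!}D^nF(0)(x_0^n)$ is the $n$th Taylor coefficient of $\xi\mapsto F(\xi x_0)$, the $\xi^n$-coefficient of $f$ is exactly $A_n$. Thus $a_n(f)=A_n$ for $n=2,3,4$, and the formulas \eqref{A234} will follow once $f$ is known to lie in the Ozaki class.

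The assumption \eqref{cc1} says that the homogeneous terms of degree $2,3,4$ of $F$ are all radial. Hence, up to order four,
\[
F(\xi x_0)=\Bigl(\xi+\sum_{k=1}^{3}H_{F,k}(x_0)\xi^{k+1}\Bigr)x_0+O(\xi^5).
\]
It follows that $A_{k+1}=H_{F,k}(x_0)$ and that $f(\xi)=\xi+\sum_{k=1}^{3} H_{F,k}(x_0)\xi^{k+1}+O(\xi^5)$. I would now compare $F$ with the auxiliary mapping $G(x)=g(x)x$, where $g(x)=1+\sum_{k=1}^{3}H_{F,k}(x)+\cdots$ is a scalar function chosen to agree with the radial part of $F$. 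If $G\in\widehat{\mathcal{F}}_{\lambda}(\mathbb{B})$, Lemma~\ref{L3} applies directly. To avoid needing membership of $G$, I would instead verify the membership of the slice function $f$ directly.

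Evaluate the defining inequality of $\widehat{\mathcal{F}}_{\lambda}(\mathbb{B})$ at $x=\xi x_0$, applying the functional $T_{\xi x_0}=\frac{|\xi|}{\xi}T_{x_0}$. Take $\Phi(x)=T_x\bigl((DF(x))^{-1}(D^2F(x)(x,x)+DF(x)x)\bigr)/\|x\|$ and expand it in powers of $\xi$. Because the derivatives of $F$ at $0$ up to order four act radially on $x_0$, the computation in the proof of Lemma~\ref{L2} can be carried out modulo $O(\xi^4)$. This should give
\[
\Phi(\xi x_0)=1+\frac{\xi f''(\xi)}{f'(\xi)}+O(\xi^4).
\]
Define $p(\xi)=\bigl(\Phi(\xi x_0)-(\tfrac12-\lambda)\bigr)/(\tfrac12+\lambda)$, which is holomorphic in $\xi$ because the factor $|\xi|/\xi$ cancels against $\|\xi x_0\|$. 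Then $p\in\mathcal{P}$, and its first three coefficients $p_1,p_2,p_3$ coincide with those generated by $1+\xi f''/f'$. Running the recursion from the proof of Lemma~\ref{L0}, which only uses coefficients up to order three, then yields \eqref{A234}.

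The main obstacle is the truncated computation of $(DF(\xi x_0))^{-1}$ and of $D^2F(\xi x_0)$ at points $\xi x_0\neq0$. The hypothesis \eqref{cc1} only controls the diagonal values $D^{k+1}F(0)(x^{k+1})$, not the full multilinear maps. The off-diagonal parts do enter $DF(\xi x_0)$, for instance via $DF(\xi x_0)h$ involving $D^2F(0)(\xi x_0,h)$. I would handle this by polarizing the identity $P_{k+1}(x)=H_{F,k}(x)x$. This gives $DP_{k+1}(x)h=DH_{F,k}(x)(h)\,x+H_{F,k}(x)h$, which has exactly the structure used for $F=gx$ in Lemma~\ref{L2}. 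Consequently the inverse formula \eqref{eq:inverse} holds to the needed order with $g=1+\sum_k H_{F,k}$, and the same cancellations go through.
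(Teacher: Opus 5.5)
Your argument is correct. Its skeleton matches the paper's: your $\Phi(\xi x_0)$ is exactly the paper's $\phi(\xi)=T_{x_0}(\varphi(\xi x_0))/\xi$, and $p\in\mathcal{P}$ is extracted from it in the same way. The difference is in how you express the Taylor coefficients of $\phi$ through $A_2,A_3,A_4$.

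The paper expands $\varphi(x)=(DF(x))^{-1}(D^2F(x)(x,x)+DF(x)x)$ for a general normalized $F$. This gives the multilinear relations \eqref{qqq2} between $D^kF(0)$ and $D^k\varphi(0)$, and only afterwards does the paper insert \eqref{cc1} to evaluate $T_{x_0}$ of each term.

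You instead argue in three steps:
\begin{enumerate}
\item[(i)] By \eqref{cc1}, $F$ agrees up to order four with the radial map $G(x)=g(x)x$, where $g=1+\sum_{k=1}^{3}H_{F,k}$.
\item[(ii)] The homogeneous terms of $\varphi$ of degree $\le 4$ depend only on those of $F$ of degree $\le 4$, because $DF(x)-DG(x)=O(\|x\|^4)$ and $(DF(x))^{-1}$ is given near $0$ by a Neumann series. You should state this explicitly in a write-up.
\item[(iii)] For $G$, the closed formula in the proof of Lemma~\ref{L2} gives $T_{x_0}(\varphi_G(\xi x_0))/\xi=1+\xi f_G''(\xi)/f_G'(\xi)$ exactly near $\xi=0$. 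Here $f_G(\xi)=g(\xi x_0)\xi$ has the same coefficients $A_2,A_3,A_4$ as your slice function $f$. The coefficient comparison behind Lemma~\ref{L0} then finishes the proof.
\end{enumerate}

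Your route explains why \eqref{cc1} suffices: to the relevant order, $F$ is indistinguishable from a map of the form $g(x)x$. It also reuses Lemma~\ref{L2} and avoids the explicit multilinear bookkeeping of \eqref{qqq2}. The paper's route produces explicit identities valid for arbitrary $F$, which could be reused under other structural hypotheses.

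One correction to your write-up: drop the stated aim of verifying $f\in\mathcal{F}_{O}(\lambda)$ ``directly''. You only establish agreement modulo $O(\xi^4)$, which says nothing about $f$ itself lying in the Ozaki class. So Lemma~\ref{L0} cannot be quoted as a black box. What you actually use, correctly, is its coefficient comparison applied to the $p$ built from $\Phi$, and that is all the statement requires.
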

\begin{proof} Fix $x_0\in \partial B$ and let $T_{x_0}\in T(x_0)$. Define
\bea\label{aaa1}
\phi(\xi)=
\begin{cases}
\dfrac{T_{x_0}\!\left(\varphi(\xi x_0)\right)}{\xi}, & \xi\neq 0,\\[2mm]
1, & \xi=0,
\end{cases}
\eea
where
\[
\varphi(x)=(DF(x))^{-1}\bigl(D^2F(x)(x,x)+DF(x)x\bigl).
\]
Then $\phi\in H(\mathbb U)$ and $\phi(0)=1$. Moreover, since
$F\in \widehat{\mathcal{F}}_{\lambda}(\mathbb{B})$ , it follows from the definition of Ozaki close to convex function that
\[
\Re\!\left(\frac{T_{x_0}\!\left(\varphi(\xi x_0)\right)}{\xi}\right)
=
\Re\!\left(\frac{T_{\xi x_0}\!\left(\varphi(\xi x_0)\right)}{\|\xi x_0\|}\right)>\frac{1}{2}-\lambda,\;\;\text{for}\;\;\frac{1}{2}\leq \lambda \leq 1
\qquad \xi\in\mathbb U\setminus\{0\}.
\]
Hence,
\[\Re\!\left(\phi(\xi)\right)>\frac{1}{2}-\lambda,\;\;\text{for}\;\;\frac{1}{2}\leq \lambda \leq 1\qquad \xi\in\mathbb U.
\]
 Then there exists $p\in \mathcal{P}$ such that 
\bea\label{kp1}\phi(z)=\left(\frac{1}{2}+\lambda\right) p(z)+\frac{1}{2}-\lambda.\eea
Since $\phi$ is holomorphic function on $\mathbb{U}$, then it has Taylor series expansion:
\bea\label{phi1}\phi(\xi)=1+\frac{T_{x_0}(D^2\varphi(0)(x_0^2))}{2!}\xi+\frac{T_{x_0}(D^3\varphi(0)(x_0^3))}{3!}\xi^2+\frac{T_{x_0}(D^4\varphi(0)(x_0^4))}{4!}\xi^3+\cdots,\xi\in \mathbb{U}.\eea
Using (\ref{aaa1}), \eqref{eq:P}, \eqref{kp1} and \eqref{phi1}, a direct computation yields
\bea\label{qq1}
\begin{cases} \frac{T_{x_0}(D^2\varphi(0)(x_0^2))}{2!}&=\left(\frac{1}{2}+\lambda\right)p_1,\\
 \frac{T_{x_0}(D^3\varphi(0)(x_0^3))}{3!}&=\left(\frac{1}{2}+\lambda\right)p_2\\
\frac{T_{x_0}(D^4\varphi(0)(x_0^4))}{4!}&=\left(\frac{1}{2}+\lambda\right)p_3.
\end{cases}
\eea
On the other hand, since $\varphi(x)=(DF(x))^{-1}\bigl(D^2F(x)(x,x)+DF(x)x\bigl)$, it follows that
\bea\label{qqq2}
\begin{cases}
\frac{D^{2}F(0)(x^{2})}{2!}&=\frac{1}{2}\frac{D^{2}\varphi(0)(x^{2})}{2!}\\
 D^{3}F(0)(x^{3})&=\frac{D^{3}\varphi(0)(x^{3})}{3!}+D^{2}F(0)\!\left(x,\,\frac{D^{2}\varphi(0)(x^{2})}{2!}\right)\\
\frac{D^{4}F(0)(x^{4})}{2}&=\frac{D^{4}\varphi(0)(x^{4})}{4!}+D^{2}F(0)\!\left(x,\,\frac{D^{3}\varphi(0)(x^{3})}{3!}\right)\\
&\;\;\;+\frac{1}{2}D^{3}F(0)\!\left(x^{2},\,\frac{D^{2}\varphi(0)(x^{2})}{2!}\right).
\end{cases}\eea

Combining(\ref{qq1}) and (\ref{qqq2}) together with assumption \eqref{cc1}, we derive explicit formulas for the coefficients \(A_2\), \(A_3\)  and \(A_4\) in terms of the parameters \(p_1\), \(p_2\) and  \(p_3\), respectively.

\bea\label{A2}A_2&=&T_{x_0}\!\left(\frac{D^2F(0)(x_0^2)}{2!}\right)=H_{F,1}(x_0)
=\frac{1}{2}\,T_{x_0}\!\left(\frac{D^2\varphi(0)(x_0^2)}{2!}\right)
=\frac{1}{2}\left(\frac{1}{2}+\lambda\right)p_1\nonumber\\
&=&\frac{1}{4}p_1(1+2\lambda)
\eea
and
\[
\frac{D^2\varphi(0)(x_0^2)}{2!}=2\,\frac{D^2F(0)(x_0^2)}{2!}=2H_{F,1}(x_0)x_0
=2A_2x_0
=\frac{1}{2}p_1(1+2\lambda)x_0.
\]

\bea\label{A3} A_3&=&T_{x_0}\!\left(\frac{D^3F(0)(x_0^3)}{3!}\right)
=H_{F,2}(x_0) \nonumber\\
&=&\frac{1}{6}\left(T_{x_0}\!\left(\frac{D^3\varphi(0)(x_0^3)}{3!}\right)
+T_{x_0}\!\left(D^2F(0)\!\left(x_0,\frac{D^2\varphi(0)(x_0^2)}{2!}\right)\right)\right) \nonumber\\
&=&\frac{1}{6}\left(\cos\beta\,e^{-i\beta}p_2+T_{x_0}\!\left(D^2F(0)(x_0,\frac{1}{2}p_1(1+2\lambda)x_0)\right)\right) \nonumber\\
&=&\frac{1}{6}\left(\left(\frac{1}{2}+\lambda\right)p_2+p_1(1+2\lambda)A_2\right)\nonumber\\
&=&\frac{1}{12}(1+2\lambda)\left(p_2+\frac{1}{2}p_1^2(1+2\lambda)\right).
\eea

and

\beas 
\frac{D^3\varphi(0)(x_0^3)}{3!}&=&D^3F(0)(x_0^3)-D^2F(0)\!\left(x_0,\frac{D^2\varphi(0)(x_0^2)}{2!}\right)\\
&=&D^3F(0)(x_0^3)-D^2F(0)(x_0,\frac{1}{2}p_1(1+2\lambda)x_0)\\
&=&\left(6H_{F,2}(x_0)-\frac{1}{4}p_1^2(1+2\lambda)^2\right)x_0\\
&=&\left(\frac{1}{2}+\lambda\right)p_2x_0\eeas
\bea\label{A4}
A_4&=& T_{x_0}\!\left(\frac{D^4F(0)(x_0^4)}{4!}\right)=H_{F,3}(x_0) \nonumber\\
&=&\frac{1}{12}\Bigg(T_{x_0}\!\left(\frac{D^4\varphi(0)(x_0^4)}{4!}\right)+T_{x_0}\!\left(D^2F(0)\!\left(x_0,\frac{D^3\varphi(0)(x_0^3)}{3!}\right)\right) \nonumber\\
&& +\frac{1}{2}T_{x_0}\!\left(D^3F(0)\!\left(x_0^2,\frac{D^2\varphi(0)(x_0^2)}{2!}\right)\right)\Bigg) \nonumber\\
&=&\frac{1}{12}\Bigg(\left(\frac{1}{2}+\lambda\right)p_3+
T_{x_0}\!\left(D^2F(0)(x_0,\left(\frac{1}{2}+\lambda\right)p_2x_0)\right) \nonumber\\
&&\frac{1}{2}T_{x_0}\!\left(D^3F(0)(x_0^2,\left(\frac{1}{2}+\lambda\right)p_1x_0)\right)\Bigg) \nonumber\\
&=&\frac{1}{12}\Bigg(\left(\frac{1}{2}+\lambda\right)p_2+2\left(\frac{1}{2}+\lambda\right)p_2A_2+3\left(\frac{1}{2}+\lambda\right)p_1A_3\Big) \nonumber\\
&=&\frac{1}{24}(1+2\lambda)\left(p_3+\frac{3}{4}p_1p_2(1+2\lambda)+\frac{1}{8}p_1^3(1+2\lambda)^2\right).\eea
\end{proof}
\section{{\bf The second Hankel determinant $H_2(2)(f)$ for class $\widehat{\mathcal{F}}_{\lambda}(\mathbb{B})$.}}
In this section, we compute sharp upper bounds for 
$H_{2}(2)(F)=
\begin{vmatrix}
A_{2} & A_{3} \\
A_{3} & A_{4} \\
\end{vmatrix}$
or equivalently, $H_{2}(2)(F)
=A_2A_4-A_3^2,$
with  $A_{2},A_{3},A_{4}$ defined by
\eqref{eq:An},
over the class $\widehat{\mathcal{F}}_{\lambda}(\mathbb{B})$.

\begin{theo}\label{H1}
Let $g\in H(\mathbb{B},\mathbb{C})$ satisfy $g(0)=1$, and define $F(x)=g(x)x, x\in\mathbb{B}.$
Suppose that $F\in\widehat{\mathcal{F}}_{\lambda}(\mathbb{B})$. Then, for every
$x_{0}\in X$ with $\|x_{0}\|=1$,
\[
|H_{2}(2)(F)|\leq  \frac{(1+2\lambda)^2(17-10\lambda)}{192(3-2\lambda)}.
\]
 The inequality is sharp.
\end{theo}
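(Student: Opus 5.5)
The plan is to reduce the statement to the one-variable Theorem A by restricting $F$ to the complex line through $x_0$, and then to prove sharpness by lifting an extremal function from the disk with Lemma~\ref{L1} or Lemma~\ref{L2}.

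\emph{Reduction to the disk.} Fix $x_0\in X$ with $\|x_0\|=1$ and $T_{x_0}\in T(x_0)$, and put $f(\xi)=g(\xi x_0)\xi$ for $\xi\in\mathbb{U}$.
\begin{itemize}
\item Since $F\in\widehat{\mathcal{F}}_{\lambda}(\mathbb{B})$, Lemma~\ref{L2} gives $f\in\mathcal{F}_{O}(\lambda)$.
\item Because $T_{x_0}(x_0)=1$, we have $f(\xi)=T_{x_0}(F(\xi x_0))$.
\item As in the proof of Lemma~\ref{L3}, the Taylor coefficients of $f$ are therefore $a_k=A_k$ for $k=2,3,4$.
\end{itemize}
Consequently
\[
H_2(2)(F)=A_2A_4-A_3^2=a_2a_4-a_3^2=H_2(2)(f).
\]
Theorem A then gives the upper bound at once. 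Equivalently, Lemma~\ref{L3} writes $A_2,A_3,A_4$ in terms of $p_1,p_2,p_3$ of some $p\in\mathcal{P}$ exactly as in Lemma~\ref{L0}, so the functional is the same polynomial in $(p_1,p_2,p_3)$ as in the disk case.

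\emph{Direct route if Theorem A is not invoked.} In this case I would work the bound out explicitly.
\begin{itemize}
\item Substitute (\ref{A234}) into $A_2A_4-A_3^2$.
\item Normalize $p_1=c\in[0,2]$ by rotation invariance of the functional.
\item Apply the Libera--Zlotkiewicz/Cho et al.\ representation of $p_2$ and $p_3$ in terms of $c$ and parameters $\zeta_1,\zeta_2\in\overline{\mathbb{U}}$.
\item Estimate with the triangle inequality and maximize the resulting function of $(c,|\zeta_1|)$.
\end{itemize}
The main obstacle is this two-variable maximization, which produces the interior critical point behind the factor $17-10\lambda$ over $3-2\lambda$. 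Since Theorem A already contains it, I would simply cite that result.

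\emph{Sharpness.} Let $f_0\in\mathcal{F}_{O}(\lambda)$ be the extremal function from \cite{ALT2022} for which equality holds in Theorem A. Choose $u\in X$ with $\|u\|=1$ and $T_u\in T(u)$, and define
\[
F_0(x)=\frac{f_0(T_u(x))}{T_u(x)}\,x .
\]
\begin{itemize}
\item By Lemma~\ref{L1} (or Lemma~\ref{L2}, taking $g(x)=f_0(T_u(x))/T_u(x)$), $F_0\in\widehat{\mathcal{F}}_{\lambda}(\mathbb{B})$, and $F_0$ has the form $g(x)x$.
\item Take $x_0=u$. Then $T_u(F_0(\xi u))=f_0(\xi)$, so $A_k=a_k(f_0)$ for $k=2,3,4$.
\item Hence $|H_2(2)(F_0)|$ equals the bound in the statement.
\end{itemize}
One point needs care here. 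Theorem A is stated for the class with strict inequality, while $\widehat{\mathcal{F}}_{\lambda}(\mathbb{B})$ uses a non-strict one. The reduction is unaffected, because a function with $\operatorname{Re}\ge\frac12-\lambda$ and nonconstant real part automatically satisfies the strict inequality.
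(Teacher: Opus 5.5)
Your proposal is correct and follows essentially the same route as the paper. You restrict to the line through $x_0$ via Lemmas~\ref{L2} and~\ref{L3}, so that $A_k=a_k$ and $H_2(2)(F)=H_2(2)(f)$ with $f\in\mathcal{F}_{O}(\lambda)$, then appeal to Theorem~A, and get sharpness by lifting the one-variable extremal through $F(x)=\frac{f(T_{x_0}(x))}{T_{x_0}(x)}x$; the paper takes $f(z)=\int_0^z(1-t_0t+t^2)^{-(\frac12+\lambda)}\,dt$ with $t_0=\sqrt{2/(3-2\lambda)}$, and your strict-versus-non-strict remark (minimum principle, plus the value $1>\frac12-\lambda$ at $\xi=0$ covering the constant case) fills a point the paper passes over silently.
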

\begin{proof} Fix \(x_{0}\in\partial\mathbb{B}\) and define
\[
f(\xi)=g(\xi x_{0})\,\xi,\qquad \xi\in\mathbb{U}.
\]
Since \(F\in\widehat{\mathcal{F}}_{\lambda}(\mathbb{B})\), it follows from Lemmas~\ref{L2} and \ref{L3} that
\begin{align*}
|H_{2}(2)(F)|
&=|A_2A_4-A_3^2|\\
&=
\left|
-\frac{(1+2\lambda)^4p_1^4}{2304}
+\frac{(1+2\lambda)^3p_1^2p_2}{1152}
-\frac{(1+2\lambda)^2p_2^2}{144}
+\frac{(1+2\lambda)^2p_1p_3}{96}
\right|.
\end{align*}
Proceeding as in the proof of Theorem~A (see \cite[Theorem~3.1]{ALT2022}), we obtain the desired result.
\end{proof}

The following example demonstrates that the estimate obtained in Theorem \ref{H1} is sharp.

\begin{exm}
Consider the mapping
\[
F(x)=
\frac{f\!\left(T_{x_0}(x)\right)}{T_{x_0}(x)}\,x,
\qquad
x_0\in\partial\mathbb{B},\;
T_{x_0}\in T(x_0),
\]
where
\[
f(z)=\int_0^{z}(1-t_0t+t^2)^{-(\frac{1}{2}+\lambda)}dt,
\qquad z\in\mathbb{U},
\]
where $t_0=\sqrt{\frac{2}{3-2\lambda}}$.
Lemma~\ref{L1} yields 
\[\frac{D^nF(0)(x^n)}{n!}=a_nT_{x_0}^{n-1}(x)x,\]
where $a_n$ is the coefficient of $z^n$ in the Taylor expansion of $f$.
Furthermore, a straightforward computation gives
\beas
\frac{D^2F(0)(x^2)}{2!}&=&\frac{(1+2\lambda)}{4}\sqrt{\frac{2}{3-2\lambda}}T_{x_0}(x)x,\;\;\;
\frac{D^3F(0)(x^3)}{3!}
=\frac{(1+2\lambda)(2\lambda-1)}{12(3-2\lambda)}
\bigl(T_{x_0}(x)\bigr)^2x,\\
\;\;\;\text{and}\;\;\;
\frac{D^4F(0)(x^4)}{4!}&=&\frac{\sqrt2\left(56\lambda^3+108\lambda^2-62\lambda-39\right)}{96(3-2\lambda)^{3/2}}\bigl(T_{x_0}(x)\bigr)^3x.
\eeas

Hence,
\[
A_2=\frac{(1+2\lambda)}{4}\sqrt{\frac{2}{3-2\lambda}},\;\;
A_3=\frac{(1+2\lambda)(2\lambda-1)}{12(3-2\lambda)},\;\;\text{and}\;\;
A_4=\frac{\sqrt2\left(56\lambda^3+108\lambda^2-62\lambda-39\right)}{96(3-2\lambda)^{3/2}}.
\]
Therefore,
\[
|H_{2}(2)(F)|
=
\left|A_2A_4-A_3^2\right|
=
 \frac{(1+2\lambda)^2(17-10\lambda)}{192(3-2\lambda)}.\]

Thus, the upper bound obtained in Theorem \ref{H1} is attained, showing that the estimate is sharp.
\end{exm}

\medskip

Next, removing the restrictive assumption $F(x)=g(x)x$, we generalize Theorem~A to higher dimensions under weaker assumptions than those of Theorem \ref{H1}. Assume that
\bea\label{cc1}
\frac{D^{k+1}F(0)\left(x^{k+1}\right)}{(k+1)!}
= H_{F,k}(x)x,\qquad x\in X,\quad k=1,2,3.
\eea
where $H_{F,k}(x)$ is a homogeneous polynomial of degree $k$ with values in $\mathbb{C}$. The fact that the assumption \eqref{cc1} is weaker than that of Theorem \ref{H1} is justified in \cite{EJ2024,H2023}.

\begin{theo}\label{H2}
Let $F$ be a locally biholomorphic mapping on $\mathbb{B}$, and suppose that $F$ satisfies the assumption \eqref{cc1}. If $F\in \widehat{\mathcal{F}}_{\lambda}(\mathbb{B})$, then for every $x_0\in X$ with $\|x_0\|=1$, we have
\[
|H_{2}(2)(F)|\leq  \frac{(1+2\lambda)^2(17-10\lambda)}{192(3-2\lambda)}.
\] The inequalities are sharp.
\end{theo}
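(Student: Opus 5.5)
The plan is to reduce Theorem~\ref{H2} to the one-variable estimate of Theorem~A, with Lemma~\ref{L4} playing the role that Lemmas~\ref{L2} and \ref{L3} played in Theorem~\ref{H1}.

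\textbf{Step 1 (coefficient formulas).} Fix $x_0\in\partial\mathbb{B}$ and $T_{x_0}\in T(x_0)$. Since $F$ is locally biholomorphic, satisfies \eqref{cc1}, and lies in $\widehat{\mathcal{F}}_{\lambda}(\mathbb{B})$, Lemma~\ref{L4} gives $A_2,A_3,A_4$ by exactly the formulas \eqref{A234}. Here $p_1,p_2,p_3$ are the coefficients of a function $p\in\mathcal{P}$, obtained from the auxiliary function $\phi$ via \eqref{kp1}.

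\textbf{Step 2 (reduction to Theorem~A).} These are precisely the expressions of Lemma~\ref{L0} for $a_2,a_3,a_4$ of a function in $\mathcal{F}_O(\lambda)$. Substituting them, $A_2A_4-A_3^2$ becomes the same polynomial in $p_1,p_2,p_3$ displayed in the proof of Theorem~\ref{H1}:
\[
-\frac{(1+2\lambda)^4p_1^4}{2304}+\frac{(1+2\lambda)^3p_1^2p_2}{1152}-\frac{(1+2\lambda)^2p_2^2}{144}+\frac{(1+2\lambda)^2p_1p_3}{96}.
\]
The proof of Theorem~A in \cite{ALT2022} bounds this expression over all $p\in\mathcal{P}$. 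It does not use any further structure of $f$: it uses the standard Libera--Z{\l}otkiewicz representation of $p_2,p_3$ in terms of $p_1=2\zeta_1$ and parameters in the closed disk, followed by maximization over those parameters. So the same argument yields the stated bound for every $x_0$.

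The only point needing care is that this bound is indeed a statement about the functional over the whole class $\mathcal{P}$. Every $p\in\mathcal{P}$ arises from some $f\in\mathcal{F}_O(\lambda)$ (solve $1+zf''/f'=(\tfrac12+\lambda)p+\tfrac12-\lambda$). Hence the supremum over $\mathcal{F}_O(\lambda)$ equals the supremum over $\mathcal{P}$, and Theorem~A transfers verbatim.

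\textbf{Step 3 (sharpness).} I would reuse the Example following Theorem~\ref{H1}. Take $F(x)=\frac{f(T_{x_0}(x))}{T_{x_0}(x)}x$ with the extremal $f$ built from $t_0=\sqrt{2/(3-2\lambda)}$.
\begin{itemize}
\item By Lemma~\ref{L1} (or Lemma~\ref{L2} with $g(x)=f(T_{x_0}(x))/T_{x_0}(x)$), $F\in\widehat{\mathcal{F}}_{\lambda}(\mathbb{B})$, and it is locally biholomorphic.
\item Its derivatives satisfy $\frac{D^{k+1}F(0)(x^{k+1})}{(k+1)!}=a_{k+1}T_{x_0}(x)^kx$. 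Thus \eqref{cc1} holds with the homogeneous polynomials $H_{F,k}(x)=a_{k+1}T_{x_0}(x)^k$.
\item The computation in that Example then gives equality in the bound.
\end{itemize}

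The main obstacle is not in this argument itself but in trusting Lemma~\ref{L4}, whose derivation of \eqref{qqq2} is terse. If I had to verify it, I would expand $DF(x)\varphi(x)=D^2F(x)(x,x)+DF(x)x$ in homogeneous terms. Using \eqref{cc1}, all the vectors involved are scalar multiples of $x$, so the $A_k$ can be identified degree by degree from $\phi$.
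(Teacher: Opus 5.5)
Your proposal is correct and follows the paper's own proof. Lemma~\ref{L4} puts $A_2,A_3,A_4$ in exactly the form of Lemma~\ref{L0}, the bound then comes from the argument of Theorem~A, and sharpness is inherited from the Example after Theorem~\ref{H1}. Your surjectivity remark (every $p\in\mathcal{P}$ arises from some $f\in\mathcal{F}_O(\lambda)$, so Theorem~A applies as a black box) and your check that the extremal mapping satisfies \eqref{cc1} with $H_{F,k}(x)=a_{k+1}T_{x_0}(x)^k$ make explicit two points the paper leaves implicit. One caution if you rely on that Example: its displayed intermediate values contain slips, e.g.\ $A_3=\frac{(1+2\lambda)(2\lambda-1)}{4(3-2\lambda)}$, with $4$ rather than $12$ in the denominator. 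The final equality $|A_2A_4-A_3^2|=\frac{(1+2\lambda)^2(17-10\lambda)}{192(3-2\lambda)}$ is nevertheless correct.
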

\begin{proof} Fix $x_0\in \partial{\mathbb{B}}$ and let $T_{x_0}\in T(x_0)$. Then applying Lemma \ref{L4} we deduce that 

\begin{align*}
|H_{2}(2)(F)|
&=|A_2A_4-A_3^2|\\
&=
\left|
-\frac{(1+2\lambda)^4p_1^4}{2304}
+\frac{(1+2\lambda)^3p_1^2p_2}{1152}
-\frac{(1+2\lambda)^2p_2^2}{144}
+\frac{(1+2\lambda)^2p_1p_3}{96}
\right|.
\end{align*}
The remaining part of the proof proceeds exactly as in the proof of Theorem A (see \cite[Theorem 3.1]{ALT2022}); hence, the details are omitted. The sharpness of the result is an immediate consequence of Theorem \ref{H1}. This completes the proof.\end{proof}
\section{{\bf The Toeplitz determinants $T_3(1)(f)$ and $T_3(2)(f)$ for class $\widehat{\mathcal{F}}_{\lambda}(\mathbb{B})$.}}
In this section, we compute sharp upper bounds for 
\[
T_3(1)(F)
=
\begin{vmatrix}
1 & A_2 & A_3\\
A_2 & 1 & A_2\\
A_3 & A_2 & 1
\end{vmatrix}\;\;
\text{and}\;\;
T_3(2)(F)
=
\begin{vmatrix}
A_2 & A_3 & A_4\\
A_3 & A_2 & A_3\\
A_4 & A_3 & A_2
\end{vmatrix}
\]
with  $A_{2},A_{3},A_{4}$ defined by
\eqref{eq:An},
over the class $\widehat{\mathcal{F}}_{\lambda}(\mathbb{B})$.
\begin{theo}\label{T1}
Let $g\in H(\mathbb{B},\mathbb{C})$ satisfy $g(0)=1$, and define $F(x)=g(x)x, x\in\mathbb{B}.$
Suppose that $F\in\widehat{\mathcal{F}}_{\lambda}(\mathbb{B})$. Then, for every
$x_{0}\in X$ with $\|x_{0}\|=1$,
\[
|T_3(1)(f)|\leq  \frac{(4+3\lambda+2\lambda^2)(7+6\lambda+8\lambda^2)}{18}
\]
 and
 \[ T_3(2)(f)|\leq \frac{1}{864}(1+2\lambda)^3(9+5\lambda+2\lambda^2)(25+17\lambda+10\lambda^2).
\]
 The inequalities are sharp.
\end{theo}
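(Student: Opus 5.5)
The plan is to reduce Theorem~\ref{T1} to the one-variable Theorems~B and~C by slicing along $x_0$, in the same way as Theorem~\ref{H1}. Fix $x_0\in\partial\mathbb{B}$ and $T_{x_0}\in T(x_0)$, and set $f(\xi)=g(\xi x_0)\xi$ on $\mathbb{U}$. By Lemma~\ref{L2}, $f\in\mathcal{F}_O(\lambda)$. Since $f(\xi)=T_{x_0}(F(\xi x_0))$, the argument in the proof of Lemma~\ref{L3} gives $A_k=a_k$ for $k=2,3,4$, where $a_k$ are the Taylor coefficients of $f$. The determinants built from the $A_k$ are then exactly the determinants of $f$:
\[
T_3(1)(F)=1-2A_2^2+2A_2^2A_3-A_3^2=T_3(1)(f),
\]
\[
T_3(2)(F)=A_2^3-2A_2A_3^2+2A_3^2A_4-A_2A_4^2=T_3(2)(f).
\]
Applying Theorem~B and Theorem~C to $f$ yields both inequalities at once.

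For a self-contained estimate, one can instead substitute the formulas \eqref{A234} from Lemma~\ref{L3}. This writes each determinant as a polynomial in $p_1,p_2,p_3$. The natural route is the triangle inequality together with $|p_k|\le 2$. For $T_3(1)$ this gives
\[
|T_3(1)|\le 1+2|A_2|^2+2|A_2|^2|A_3|+|A_3|^2 .
\]
The bounds $|A_2|\le (1+2\lambda)/2$ and $|A_3|\le (1+2\lambda)(3+2\lambda)/12$ come from $p_1=p_2=2$ in \eqref{A234}. Plugging these in should factor as $(4+3\lambda+2\lambda^2)(7+6\lambda+8\lambda^2)/18$. For $T_3(2)$, the same method also needs $|A_4|\le(1+2\lambda)(3+2\lambda)(5+2\lambda)/48$. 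Summing the absolute values of the four terms should produce the stated product. Checking these algebraic factorizations is routine but tedious.

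Sharpness comes from the extremal map $p(z)=(1+z)/(1-z)$, i.e. all $p_k=2$. Take
\[
f(z)=\int_0^z(1-t)^{-(1+2\lambda)}\,dt
\]
and form $F(x)=\frac{f(T_{x_0}(x))}{T_{x_0}(x)}x$. By Lemma~\ref{L1}, $F\in\widehat{\mathcal{F}}_{\lambda}(\mathbb{B})$, and $F$ has the form $g(x)x$. Computing the coefficients as in the example after Theorem~\ref{H1} gives $A_2,A_3,A_4>0$ equal to the maximal values above.

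The main obstacle is this equality case. All coefficients are positive, but the determinant terms carry mixed signs. So one must verify that for $T_3(1)$ the value obtained (rather than the triangle-inequality bound) equals the stated constant in absolute value. If it does not, the extremal should instead be taken with $p(z)=(1-z)/(1+z)$ or a rotation $p(e^{i\theta}z)$, which makes the signs of $A_2$ and $A_4$ alternate. I would first try $z\mapsto -z$ in $f$, so that $A_2<0$, $A_3>0$, $A_4<0$. With that choice all terms of $T_3(2)$ have the same sign. For $T_3(1)$, however, the alternation leaves the sign of the $2A_2^2A_3$ term unchanged, so I would resort to purely imaginary $A_2$ (a rotation by $i$) to align those terms. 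Which rotation works is exactly what Theorems~B and~C already settle in one variable, and transferring their extremal function through Lemma~\ref{L1} completes the proof.
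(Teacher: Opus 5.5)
Your first paragraph is exactly the paper's proof, and it is correct. With $f(\xi)=g(\xi x_0)\xi$, Lemmas~\ref{L2} and~\ref{L3} give $f\in\mathcal{F}_O(\lambda)$ and $A_k=a_k$ for $k=2,3,4$. Hence $T_3(1)(F)=T_3(1)(f)$ and $T_3(2)(F)=T_3(2)(f)$, and Theorems~B and~C apply. Sharpness likewise follows, as in the paper, by lifting a one-variable extremal $f$ through $F(x)=\frac{f(T_{x_0}(x))}{T_{x_0}(x)}x$, for which $A_k=a_k$.

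The supplementary parts of your proposal are wrong, though, and should be dropped or repaired.

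\textbf{Coefficient values.} Setting $p_1=p_2=p_3=2$ in \eqref{A234} gives $A_3=\frac{(1+2\lambda)(1+\lambda)}{3}$ and $A_4=\frac{(1+2\lambda)(1+\lambda)(3+2\lambda)}{12}$, not the values you wrote.

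\textbf{Termwise triangle inequality.} This route cannot give the sharp constants. With the correct bounds, $1+2|A_2|^2+2|A_2|^2|A_3|+|A_3|^2$ exceeds the constant of Theorem~B by $\frac{2}{9}(1+\lambda)^2(1+2\lambda)^2$. At $\lambda=\frac12$, where $A_2=A_3=A_4=1$, both termwise sums equal $6$, while both sharp values are $4$. A sharp self-contained argument for $T_3(1)$ instead groups the terms as $T_3(1)=1-2A_2^2+A_3(2A_2^2-A_3)$. Here $2A_2^2-A_3=\frac{1+2\lambda}{12}\bigl((1+2\lambda)p_1^2-p_2\bigr)$, and one uses $|p_2-vp_1^2|\le 2(2v-1)$ for $v\ge 1$. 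This yields exactly $1+\frac{(1+2\lambda)^2}{2}+\frac{(1+2\lambda)^2(1+\lambda)(1+4\lambda)}{18}$, which equals the constant of Theorem~B.

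\textbf{Sign analysis of the extremal.} The real-coefficient function $\int_0^z(1-t)^{-(1+2\lambda)}\,dt$ gives $T_3(1)=T_3(2)=0$ at $\lambda=\frac12$. The substitution $z\mapsto -z$ sends $A_n\mapsto(-1)^{n-1}A_n$. This leaves $T_3(1)$ unchanged and maps $T_3(2)\mapsto -T_3(2)$, so it does not help for either determinant. What works for both determinants simultaneously is the rotation by $i$, namely $f'(z)=(1-iz)^{-(1+2\lambda)}$; this is the paper's $f(z)=\frac{1}{2i\lambda}\bigl((1-iz)^{-2\lambda}-1\bigr)$. It gives $A_2=\frac{i}{2}(1+2\lambda)$, $A_3=-\frac13(1+\lambda)(1+2\lambda)$ and $A_4=-\frac{i}{12}(1+\lambda)(1+2\lambda)(3+2\lambda)$. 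Note the factor $i$ in $A_4$, which the paper's display omits.
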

\begin{proof} Fix $x_0\in \partial{\mathbb{B}}$ and let $T_{x_0}\in T(x_0)$. Then applying Lemma \ref{L3} we deduce that 

\begin{align*}
|T_3(1)(F)|=
&=|1-2A_2^2+2A_2^2A_3-A_3^2|\\
&=|1-2a_2^2+2a_2^2a_3-a_3^2|\\
&=|T_3(1)(f)|
\end{align*}
and 
\begin{align*}
|T_3(2)(F)|
&=\left|(A_2-A_4)(A_2^2-2A_3^2+A_2A_4)\right|\\
&=\left|(a_2-a_4)(a_2^2-2a_3^2+a_2a_4)\right|\\
&=|T_3(2)(f)|.
\end{align*}
Hence, from Theorems B and C (see \cite[Theorem 4.2, Theorem 4.3]{ALT2022}), we obtain that
\[ |T_3(1)(F)|=|T_3(1)(f)|\leq \frac{(4+3\lambda+2\lambda^2)(7+6\lambda+8\lambda^2)}{18}\]
and 
\[ |T_3(2)(F)|=|T_3(2)(f)| \leq \frac{1}{864}(1+2\lambda)^3(9+5\lambda+2\lambda^2)(25+17\lambda+10\lambda^2).\]
\end{proof}
The following example demonstrates that the estimates obtained in Theorem \ref{T1} are sharp.

\begin{exm}
Consider the mapping
\[
F(x)=
\frac{f\!\left(T_{x_0}(x)\right)}{T_{x_0}(x)}\,x,
\qquad
x_0\in\partial\mathbb{B},\;
T_{x_0}\in T(x_0),
\]
where
\[
f(z)=\frac{1}{2i\lambda}\left(\frac{1}{(1-iz)^{2\lambda}}-1\right),
\qquad z\in\mathbb{U},
\]
By Lemma~\ref{L2} we have $F\in \widehat{\mathcal{F}}_{\lambda}(\mathbb{B})$.

Furthermore, a straightforward computation gives
\beas
\frac{D^2F(0)(x^2)}{2!}&=&\frac{i}{2}(1+2\lambda)T_{x_0}(x)x,\;\;\;
\frac{D^3F(0)(x^3)}{3!}
=-\frac{1}{3}(1+\lambda)(1+2\lambda)\bigl(T_{x_0}(x)\bigr)^2x,\\
\;\;\;\text{and}\;\;\;
\frac{D^4F(0)(x^4)}{4!}&=&-\frac{1}{12}(1+\lambda)(1+2\lambda)(3+2\lambda)\bigl(T_{x_0}(x)\bigr)^3x.
\eeas

Hence,
\[
A_2=\frac{i}{2}(1+2\lambda),\;\;
A_3=-\frac{1}{3}(1+\lambda)(1+2\lambda),\;\;\text{and}\;\;
A_4=-\frac{1}{12}(1+\lambda)(1+2\lambda)(3+2\lambda).
\]
Therefore,
\[
|T_{3}(1)(F)|
=
\left|1-2A_2^2+2A_2^2A_3-A_3^2\right|
=\frac{(4+3\lambda+2\lambda^2)(7+6\lambda+8\lambda^2)}{18}.\]
and 
\[
|T_{3}(2)(F)|
=
\left|(A_2-A_4)(A_2^2-2A_3^2+A_2A_4)\right|
=\frac{1}{864}(1+2\lambda)^3(9+5\lambda+2\lambda^2)(25+17\lambda+10\lambda^2).\]

Thus, the upper bound obtained in Theorem \ref{T1} is attained, showing that the estimates are sharp.
\end{exm}
Next, removing the restrictive assumption $F(x)=g(x)x$, we generalize Theorems B and C to higher dimensions under weaker assumptions than those of Theorem \ref{T1}. 
\begin{theo}\label{T2}
Let $F$ be a locally biholomorphic mapping on $\mathbb{B}$, and suppose that $F$ satisfies the assumption \eqref{cc1}. If $F\in \widehat{\mathcal{F}}_{\lambda}(\mathbb{B})$, then for every $x_0\in X$ with $\|x_0\|=1$, we have
\[
|T_{3}(1)(F)|\leq  \frac{(4+3\lambda+2\lambda^2)(7+6\lambda+8\lambda^2)}{18}.
\]
and 
 \[ T_3(2)(f)|\leq \frac{1}{864}(1+2\lambda)^3(9+5\lambda+2\lambda^2)(25+17\lambda+10\lambda^2),
\]
 The inequalities are sharp.
\end{theo}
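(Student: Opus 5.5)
The plan is to reduce Theorem~\ref{T2} to the one-variable Theorems~B and~C, in the same way Theorem~\ref{H2} was reduced to Theorem~A. Fix $x_0\in\partial\mathbb{B}$ and $T_{x_0}\in T(x_0)$.

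\emph{Step 1: express the coefficients through a Carath\'eodory function.} Since $F$ satisfies \eqref{cc1} and $F\in\widehat{\mathcal{F}}_{\lambda}(\mathbb{B})$, Lemma~\ref{L4} gives $A_2,A_3,A_4$ by the formulas \eqref{A234}. Here $p_1,p_2,p_3$ are the first three Taylor coefficients of the function $p\in\mathcal{P}$ defined through \eqref{kp1}.

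\emph{Step 2: identify these with the coefficients of an Ozaki function.} These are exactly the formulas of Lemma~\ref{L0}. The key observation is that, for any $p\in\mathcal{P}$, there is an $f\in\mathcal{F}_O(\lambda)$ with $a_2=A_2$, $a_3=A_3$, $a_4=A_4$. Take $f$ to be the solution of
\[
1+\frac{zf''(z)}{f'(z)}=\Bigl(\tfrac12+\lambda\Bigr)p(z)+\tfrac12-\lambda,
\]
that is,
\[
f'(z)=\exp\int_0^z\frac{(\frac12+\lambda)(p(t)-1)}{t}\,dt .
\]
This $f$ is locally univalent and lies in $\mathcal{F}_O(\lambda)$, and its coefficients are given by Lemma~\ref{L0} with the same $p_k$. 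Consequently
\[
T_3(1)(F)=1-2A_2^2+2A_2^2A_3-A_3^2=T_3(1)(f),
\]
and, using the factorisation already used in Theorem~\ref{T1},
\[
T_3(2)(F)=(A_2-A_4)(A_2^2-2A_3^2+A_2A_4)=T_3(2)(f).
\]

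\emph{Step 3: apply the disk results.} Theorems~B and~C then give both bounds at once.

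If one prefers not to construct $f$, an alternative is to substitute \eqref{A234} directly into $T_3(1)(F)$ and $T_3(2)(F)$ to get explicit polynomials in $p_1,p_2,p_3$. One then repeats the argument of \cite[Theorems 4.2, 4.3]{ALT2022}, which only uses the estimates $|p_k|\le2$ (and the representation of $p_2,p_3$ in terms of $p_1$ via the standard Carath\'eodory lemmas). The expressions coincide with those in \cite{ALT2022}, so the details can be cited.

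\emph{Step 4: sharpness.} Sharpness follows from the example after Theorem~\ref{T1}. That mapping $F(x)=\frac{f(T_{x_0}(x))}{T_{x_0}(x)}x$ has $\frac{D^{k+1}F(0)(x^{k+1})}{(k+1)!}=a_{k+1}T_{x_0}(x)^kx$. Hence it satisfies \eqref{cc1} with $H_{F,k}=a_{k+1}T_{x_0}^k$, it lies in $\widehat{\mathcal{F}}_{\lambda}(\mathbb{B})$ by Lemma~\ref{L1}, and it attains both bounds.

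The main point requiring care is Step~2. One must make sure the coefficients from Lemma~\ref{L4} are genuinely generated by an element of $\mathcal{P}$, that is, that $\phi$ from \eqref{aaa1} is holomorphic with $\phi(0)=1$ and real part exceeding $\frac12-\lambda$. Then the one-variable extremal problem coincides with the Banach-space one, with no extra constraint or extra freedom.
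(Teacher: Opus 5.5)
Your proposal is correct. Its skeleton matches the paper's: Lemma~\ref{L4} writes $A_2,A_3,A_4$ through a Carath\'eodory function $p$, the disk theorems give the bounds, and the example after Theorem~\ref{T1} gives sharpness.

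Where you differ is the reduction step. The paper's proof only says that after Lemma~\ref{L4} one proceeds \emph{analogously} to the proofs in \cite{ALT2022}. That means rerunning the one-variable extremal argument on the explicit polynomials in $p_1,p_2,p_3$, which is your fallback route.

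Your main route instead realizes $(A_2,A_3,A_4)$ as the coefficients of an actual $f\in\mathcal{F}_O(\lambda)$, obtained by solving $1+zf''/f'=(\tfrac12+\lambda)p+\tfrac12-\lambda$. Lemma~\ref{L0} and \eqref{A234} are literally the same formulas, so $T_3(1)(F)=T_3(1)(f)$ and $T_3(2)(F)=T_3(2)(f)$, and Theorems~B and~C can be quoted as black boxes. This plays the role that the slice function $f(\xi)=g(\xi x_0)\xi$ of Lemma~\ref{L3} plays for Theorem~\ref{T1}. Under \eqref{cc1} that device has no obvious analogue, since the hypothesis constrains only the terms of order at most four, so your construction from $p$ is the natural substitute.

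What your version buys:
\begin{itemize}
\item It uses only the statements of the disk results, not the internal structure of their proofs.
\item Combined with the sharpness construction, it shows the attainable triples $(A_2,A_3,A_4)$ and $(a_2,a_3,a_4)$ coincide. Indeed, every $f\in\mathcal{F}_O(\lambda)$ yields, via $F(x)=\frac{f(T_{x_0}(x))}{T_{x_0}(x)}x$ and Lemma~\ref{L1}, a mapping satisfying \eqref{cc1} with $A_k=a_k$.
\item Consequently Theorems~\ref{H2}, \ref{HT2} and \ref{J2} follow by the same one-line argument.
\end{itemize}
The paper's route reaches the same bounds but tacitly relies on the arguments in \cite{ALT2022} using nothing beyond the $p$-parametrization of the coefficients.
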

\begin{proof} Fix $x_0\in \partial{\mathbb{B}}$ and let $T_{x_0}\in T(x_0)$. Next, applying Lemma \ref{L4} and proceeding analogously to the proofs of Theorems B and C (see \cite[Theorems 4.1 and 4.3]{ALT2022}), we deduce that

\begin{align*}
|T_{3}(1)(F)|
&=|1-2A_2^2+2A_2^2A_3-A_3^2|\\
&\leq \frac{(4+3\lambda+2\lambda^2)(7+6\lambda+8\lambda^2)}{18}
\end{align*}
and 
\beas |T_3(2)(f)| &=& \left|(A_2-A_4)(A_2^2-2A_3^2+A_2A_4)\right|\\
&\leq & \frac{1}{864}(1+2\lambda)^3(9+5\lambda+2\lambda^2)(25+17\lambda+10\lambda^2).
\eeas
 The sharpness of the result is an immediate consequence of Theorem \ref{T1}. This completes the proof.\end{proof}

\section{{\bf The Hermitian-Toeplitz determinant $T_{3,1}(f)$ for class $\widehat{\mathcal{F}}_{\lambda}(\mathbb{B})$.}}
In this section, we compute sharp lower and upper bounds for 
\[T_{3,1}(F)=
\begin{vmatrix}
1 & A_{2} &A_3\\
\overline{A_{2}} & 1 & A_2 \\
\overline{A_3}& \overline{A_2} & 1
\end{vmatrix}=2\Re\;(A_2^2\overline{A_2})-2|A_2|^2-|A_3|^2+1\]
over the class $\widehat{\mathcal{F}}_{\lambda}(\mathbb{B})$.

\begin{theo}\label{HT1}
Let $g\in H(\mathbb{B},\mathbb{C})$ satisfy $g(0)=1$, and define $F(x)=g(x)x, x\in\mathbb{B}.$
Suppose that $F\in\widehat{\mathcal{F}}_{\lambda}(\mathbb{B})$. Then, for every
$x_{0}\in X$ with $\|x_{0}\|=1$,
\[
-\frac{(2\lambda-1)^2(2\lambda+5)^2}{64\lambda(2\lambda+3)}\leq T_{3,1}(F)\leq  \begin{cases} 1 &\lambda\in [1/2, (\sqrt{153}-5)/8]\\
1+\frac{(1+2\lambda)^2(4\lambda^2+5\lambda-8)}{18}, &\lambda\in ((\sqrt{153}-5)/8,1]
\end{cases}
\]
 The inequalities are sharp.
\end{theo}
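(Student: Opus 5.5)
The plan is to reduce Theorem \ref{HT1} to the one-variable Theorem~D, as was done for Theorems \ref{H1} and \ref{T1}. Fix $x_0\in\partial\mathbb{B}$ and $T_{x_0}\in T(x_0)$, and set $f(\xi)=g(\xi x_0)\,\xi$ for $\xi\in\mathbb{U}$. Since $F\in\widehat{\mathcal{F}}_{\lambda}(\mathbb{B})$, Lemma~\ref{L2} gives $f\in\mathcal{F}_{O}(\lambda)$. We have $f(\xi)=T_{x_0}(F(\xi x_0))$, because $T_{x_0}(\xi x_0)=\xi$. Hence the proof of Lemma~\ref{L3} shows $A_2=a_2$ and $A_3=a_3$, where $a_n$ are the Taylor coefficients of $f$. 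Both sides are computed from the same functional $T_{x_0}$, so the identification also holds for complex conjugates: $\overline{A_k}=\overline{a_k}$.

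Next, write out the determinant correctly. The expression displayed before the theorem contains a typo; the correct expansion, from the definition of the Hermitian--Toeplitz determinant, is
\[
T_{3,1}(F)=1-2|A_2|^2+2\operatorname{Re}\bigl(A_2^2\overline{A_3}\bigr)-|A_3|^2 .
\]
Substituting $A_2=a_2$ and $A_3=a_3$ gives $T_{3,1}(F)=T_{3,1}(f)$. Note that this is a real quantity, so no absolute value is needed. Theorem~D now gives both the lower bound $-\frac{(2\lambda-1)^2(2\lambda+5)^2}{64\lambda(2\lambda+3)}$ and the piecewise upper bound, with the threshold $\lambda=(\sqrt{153}-5)/8$. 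Since $x_0$ was arbitrary, the estimate holds for every unit vector.

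For sharpness, take the one-variable extremal functions $f^{*}\in\mathcal{F}_{O}(\lambda)$ from \cite[Theorem 5.1]{ALT2022}, one for each bound and each range of $\lambda$, and lift them by
\[
F(x)=\frac{f^{*}(T_{x_0}(x))}{T_{x_0}(x)}\,x .
\]
This is of the form $g(x)x$ with $g(0)=1$, and Lemma~\ref{L1} (or Lemma~\ref{L2}) shows it lies in $\widehat{\mathcal{F}}_{\lambda}(\mathbb{B})$. As in the earlier examples, $\frac{D^nF(0)(x^n)}{n!}=a_n^{*}T_{x_0}(x)^{n-1}x$, so $A_n=a_n^{*}$ and $T_{3,1}(F)=T_{3,1}(f^{*})$ attains each bound.

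The upper bound $1$ for $\lambda\le(\sqrt{153}-5)/8$ needs no special extremal function: the identity map $F(x)=x$, i.e.\ $f^{*}(z)=z$, belongs to the class and gives $T_{3,1}=1$. For the other upper bound, the natural candidate is the lift of $f^{*}(z)=\frac{1}{2i\lambda}\bigl((1-iz)^{-2\lambda}-1\bigr)$ from the Toeplitz example. Its coefficients are $a_2^{*}=\frac{i}{2}(1+2\lambda)$ and $a_3^{*}=-\frac13(1+\lambda)(1+2\lambda)$, and I expect a direct check to give $1+\frac{(1+2\lambda)^2(4\lambda^2+5\lambda-8)}{18}$.

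The main obstacle is the lower bound. I expect its extremal function to be of the form $\int_0^z(1-t_0t+t^2)^{-(\frac12+\lambda)}\,dt$, with a parameter $t_0$ depending on $\lambda$, as in the Hankel example. Identifying the exact $t_0$ from \cite{ALT2022} and verifying that its coefficients attain the stated value is the one genuine computation in the argument.
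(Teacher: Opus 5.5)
Your proposal is correct and is essentially the paper's proof: you identify $A_2=a_2$ and $A_3=a_3$ via Lemmas~\ref{L2} and~\ref{L3}, so that $T_{3,1}(F)=T_{3,1}(f)$ (using the correct term $2\operatorname{Re}(A_2^2\overline{A_3})$, which the paper misprints), apply Theorem~D, and lift one-variable extremals by $F(x)=\frac{f^{*}(T_{x_0}(x))}{T_{x_0}(x)}\,x$. Your general lifting step already gives sharpness of the lower bound without computing $t_0$ (the paper uses the parameter $2\sqrt{\frac{2\lambda+15}{8\lambda(2\lambda+3)}}$), and your rotated function $\frac{1}{2i\lambda}\bigl((1-iz)^{-2\lambda}-1\bigr)$ does give $1+\frac{(1+2\lambda)^2(4\lambda^2+5\lambda-8)}{18}$, just as the paper's sign-corrected $\frac{(1-z)^{-2\lambda}-1}{2\lambda}$ does.
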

\begin{proof} Fix $x_0\in \partial{\mathbb{B}}$ and let $T_{x_0}\in T(x_0)$. Then applying Lemma \ref{L3} we deduce that 

\begin{align*}
T_{3,1}(F)
&=2\Re\;(A_2^2\overline{A_2})-2|A_2|^2-|A_3|^2+1\\
&=2\Re\;(A_2^2\overline{a_2})-2|a_2|^2-|a_3|^2+1\\
&=T_{3,1}(f).
\end{align*}
Hence, from Theorem D (see \cite[Theorem 5.1]{ALT2022}), we obtain that
\[
-\frac{(2\lambda-1)^2(2\lambda+5)^2}{64\lambda(2\lambda+3)}\leq T_{3,1}(F)\leq 
\begin{cases} 1 &\lambda\in [1/2, (\sqrt{153}-5)/8]\\
1+\frac{(1+2\lambda)^2(4\lambda^2+5\lambda-8)}{18}, &\lambda\in ((\sqrt{153}-5)/8,1]
\end{cases}
\]

\end{proof}
The following example shows that the upper bound in Theorem \ref{HT1} is sharp.

\begin{exm} To prove sharpness of upper bound we devide two cases:\\

{\bf Case I:} First suppose that  $\lambda\in [1/2,(\sqrt{153}-5)/8]]$. We consider the mapping
\[
F(x)=
\frac{f\!\left(T_{x_0}(x)\right)}{T_{x_0}(x)}\,x,
\qquad
x_0\in\partial\mathbb{B},\;
T_{x_0}\in T(x_0),
\]
where $f(z)=z$, for $z\in\mathbb{U}.$
By Lemma~\ref{L2} we have $F\in \widehat{\mathcal{F}}_{\lambda}(\mathbb{B})$.

Furthermore, a straightforward computation gives
\beas
\frac{D^2F(0)(x^2)}{2!}=0
\;\;\;\text{and}\;\;\;\frac{D^3F(0)(x^3)}{3!}
=0.
\eeas

Hence $A_2=0$ and $A_3=0$. 
Therefore,
\[
T_{3,1}(F)
=2\Re\;(A_2^2\overline{A_2})-2|A_2|^2-|A_3|^2+1=1.\]

\medskip

{\bf Case II:} Next suppose that  $\lambda\in ((\sqrt{153}-5)/8,1]$. We consider the mapping
\[
F(x)=
\frac{f\!\left(T_{x_0}(x)\right)}{T_{x_0}(x)}\,x,
\qquad
x_0\in\partial\mathbb{B},\;
T_{x_0}\in T(x_0),
\]
where $f(z)=\dfrac{1-(1-z)^{-2\lambda}}{2\lambda}$, for $z\in\mathbb{U}.$
By Lemma~\ref{L2} we have $F\in \widehat{\mathcal{F}}_{\lambda}(\mathbb{B})$.

Furthermore, a straightforward computation gives
\beas
\frac{D^2F(0)(x^2)}{2!}=\frac{1}{2}(1+2\lambda)T_{x_0}(x)x
\;\;\;\text{and}\;\;\;\frac{D^3F(0)(x^3)}{3!}
=\frac{1}{3}(1+2\lambda)(1+\lambda)T_{x_0}(x)^2x.
\eeas

Hence $A_2=\frac{1}{2}(1+2\lambda)$ and $A_3=\frac{1}{3}(1+2\lambda)(1+\lambda)$. 
Therefore,
\[
T_{3,1}(F)
=2\Re\;(A_2^2\overline{A_2})-2|A_2|^2-|A_3|^2+1=1+\frac{(1+2\lambda)^2(4\lambda^2+5\lambda-8)}{18}.\]

Thus, the upper bound obtained in Theorem \ref{HT1} is attained, showing that the estimate is sharp.\\

\end{exm}

The following example shows that the lower bound in Theorem \ref{HT1} is sharp.

\begin{exm}
Consider the mapping
\[
F(x)=
\frac{f\!\left(T_{x_0}(x)\right)}{T_{x_0}(x)}\,x,
\qquad
x_0\in\partial\mathbb{B},\;
T_{x_0}\in T(x_0),
\]
where
\[
f(z)=\int_{0}^z(1-at+t^2)^{-\left(\frac{1}{2}+\lambda\right)}dt, 
\qquad z\in\mathbb{U},
\]
where $a=2\sqrt{\frac{2\lambda+15}{8\lambda(2\lambda+3)}}$. 
By Lemma~\ref{L2} we have $F\in \widehat{\mathcal{F}}_{\lambda}(\mathbb{B})$.

Furthermore, a straightforward computation gives
\beas
\frac{D^2F(0)(x^2)}{2!}&=&\frac{1+2\lambda}{2}\sqrt{\frac{2\lambda+15}{8\lambda(2\lambda+3)}}T_{x_0}(x)x,\\
\;\;\;\text{and}\;\;\;\frac{D^3F(0)(x^3)}{3!}
&=&\frac{(1+2\lambda)(-4\lambda^2+4\lambda+15)}{16\lambda(2\lambda+3)}\bigl(T_{x_0}(x)\bigr)^2x.
\eeas

Hence,
\[
A_2=\frac{1+2\lambda}{2}\sqrt{\frac{2\lambda+15}{8\lambda(2\lambda+3)}},\;\;
\text{and}\;\;A_3=\frac{(1+2\lambda)(-4\lambda^2+4\lambda+15)}{16\lambda(2\lambda+3)}.
\]
Therefore,
\[
T_{3,1}(F)
=2\Re\;(A_2^2\overline{A_2})-2|A_2|^2-|A_3|^2+1
=-\frac{(2\lambda-1)^2(2\lambda+5)^2}{64\lambda(2\lambda+3)}.\]

Thus, the lower bound obtained in Theorem \ref{HT1} is attained, showing that the estimate is sharp.
\end{exm}

Next, removing the restrictive assumption $F(x)=g(x)x$, we generalize Theorem D to higher dimensions under weaker assumptions than those of Theorem \ref{HT1}. 
\begin{theo}\label{HT2}
Let $F$ be a locally biholomorphic mapping on $\mathbb{B}$, and suppose that $F$ satisfies the assumption \eqref{cc1}. If $F\in \widehat{\mathcal{F}}_{\lambda}(\mathbb{B})$, then for every $x_0\in X$ with $\|x_0\|=1$, we have
\[
-\frac{(2\lambda-1)^2(2\lambda+5)^2}{64\lambda(2\lambda+3)}\leq T_{3,1}(F)\leq  \begin{cases} 1 &\lambda\in [1/2, (\sqrt{153}-5)/8]\\
1+\frac{(1+2\lambda)^2(4\lambda^2+5\lambda-8)}{18}, &\lambda\in ((\sqrt{153}-5)/8,1]
\end{cases}
\]
 The inequalities are sharp.
\end{theo}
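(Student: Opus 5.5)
The plan is to reduce Theorem~\ref{HT2} to the one-variable Theorem~D by way of Lemma~\ref{L4}, in the same way that Theorems~\ref{H2} and \ref{T2} were obtained.

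Fix $x_0\in\partial\mathbb{B}$ and $T_{x_0}\in T(x_0)$. Since $F\in\widehat{\mathcal{F}}_{\lambda}(\mathbb{B})$ satisfies \eqref{cc1}, Lemma~\ref{L4} gives a function $p\in\mathcal{P}$, namely the one built from $\phi$ in \eqref{kp1}, whose coefficients $p_1,p_2$ produce
\[
A_2=\tfrac14(1+2\lambda)p_1,\qquad A_3=\tfrac1{12}(1+2\lambda)\Bigl(p_2+\tfrac12(1+2\lambda)p_1^2\Bigr).
\]
These are exactly the formulas of Lemma~\ref{L0} for $a_2,a_3$ of a function in $\mathcal{F}_O(\lambda)$. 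I would make this precise as follows. Given this $p$, define $f$ on $\mathbb{U}$ by
\[
1+\frac{zf''(z)}{f'(z)}=\Bigl(\tfrac12+\lambda\Bigr)p(z)+\tfrac12-\lambda,
\]
that is, $f'(z)=\exp\int_0^z \frac{(\frac12+\lambda)(p(t)-1)}{t}\,dt$. Then $f'$ is zero-free, so $f$ is locally univalent, and $\operatorname{Re}\bigl(1+zf''/f'\bigr)>\frac12-\lambda$, so $f\in\mathcal{F}_O(\lambda)$. By Lemma~\ref{L0}, $a_2(f)=A_2$ and $a_3(f)=A_3$.

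The key identity is that the expression
\[
T_{3,1}(F)=1-2|A_2|^2+2\operatorname{Re}(A_2^2\overline{A_3})-|A_3|^2
\]
depends only on $A_2$ and $A_3$. (The displayed formula in the section header has a typo; the correct term is $A_2^2\overline{A_3}$.) Hence $T_{3,1}(F)=T_{3,1}(f)$, and Theorem~D gives both the lower bound and the piecewise upper bound for every $\lambda\in[\frac12,1]$.

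Sharpness needs no new work. The extremal mappings in the examples following Theorem~\ref{HT1} have the form $F(x)=\frac{f(T_{x_0}(x))}{T_{x_0}(x)}x$. For these, $\frac{D^{k+1}F(0)(x^{k+1})}{(k+1)!}=a_{k+1}T_{x_0}(x)^k x$, so they satisfy \eqref{cc1} with $H_{F,k}(x)=a_{k+1}T_{x_0}(x)^k$. They are locally biholomorphic and lie in $\widehat{\mathcal{F}}_{\lambda}(\mathbb{B})$ by Lemma~\ref{L2}, applied with $g(x)=f(T_{x_0}(x))/T_{x_0}(x)$. They therefore fall under Theorem~\ref{HT2} and attain both bounds.

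The main obstacle is justifying that the coefficient formulas really come from a genuine $p\in\mathcal{P}$, so that the extremal analysis of Theorem~D applies. That analysis rests on the Carathéodory-class parametrization of $p_2$, and possibly $p_3$, in terms of $p_1$. In Lemma~\ref{L4} this is ensured by the representation \eqref{kp1} of $\phi$. I would also check the small point that $\phi$ is holomorphic at $0$ with $\phi(0)=1$: since $\varphi(0)=0$ and $D\varphi(0)=I$, the expansion of $T_{x_0}(\varphi(\xi x_0))$ starts with $\xi$. Once $p$ is certified, the reduction to a one-variable $f$ avoids redoing the computations of \cite{ALT2022}.
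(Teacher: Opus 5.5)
Your proof is correct and follows the same route as the paper. Lemma~\ref{L4} supplies $p\in\mathcal{P}$ with $A_2,A_3$ given by the formulas of Lemma~\ref{L0}, Theorem~D then gives both bounds, and sharpness comes from the extremal maps after Theorem~\ref{HT1}, which satisfy \eqref{cc1}. Your one refinement is to realize $(A_2,A_3)$ as the coefficients of an actual $f\in\mathcal{F}_{O}(\lambda)$ defined by $1+zf''/f'=(\tfrac12+\lambda)p+\tfrac12-\lambda$, so that Theorem~D applies as a black box; this makes the paper's ``proceeding analogously'' rigorous. For membership of the extremal maps, cite Lemma~\ref{L1} rather than Lemma~\ref{L2}, since the converse of Lemma~\ref{L2} as stated only uses information on the line $\mathbb{C}x_0$.
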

\begin{proof} Fix $x_0\in \partial{\mathbb{B}}$ and let $T_{x_0}\in T(x_0)$. Next, applying Lemma \ref{L4} and proceeding analogously to the proofs of Theorems D (see \cite[Theorem 5.1]{ALT2022}), we deduce that

\[
-\frac{(2\lambda-1)^2(2\lambda+5)^2}{64\lambda(2\lambda+3)}\leq T_{3,1}(F)\leq  \begin{cases} 1 &\lambda\in [1/2, (\sqrt{153}-5)/8]\\
1+\frac{(1+2\lambda)^2(4\lambda^2+5\lambda-8)}{18}, &\lambda\in ((\sqrt{153}-5)/8,1]
\end{cases}
\]
 The sharpness of the result is an immediate consequence of Theorem \ref{HT1}. This completes the proof.
\end{proof}
\section{{\bf The Zalcman functional $J_{2,3}(F)$ for class $\widehat{\mathcal{F}}_{\lambda}(\mathbb{B})$.}}
In this section, we compute sharp upper bounds for $J_{2,3}(F)|=|A_2A_3-A_4|$
over the class $\widehat{\mathcal{F}}_{\lambda}(\mathbb{B})$.

\begin{theo}\label{J1}
Let $g\in H(\mathbb{B},\mathbb{C})$ satisfy $g(0)=1$, and define $F(x)=g(x)x, x\in\mathbb{B}.$
Suppose that $F\in\widehat{\mathcal{F}}_{\lambda}(\mathbb{B})$. Then, for every
$x_{0}\in X$ with $\|x_{0}\|=1$,
\[
|J_{2,3}(F)|\leq \frac{(1+2\lambda)(7+2\lambda)^{3/2}}{36\sqrt{3(9-4\lambda^2)}}
\]
 The inequality is sharp.
\end{theo}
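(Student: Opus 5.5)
The plan is to reduce the statement to the one-variable Theorem~E, exactly as was done for Theorems~\ref{H1}, \ref{T1} and \ref{HT1}. Fix $x_0\in\partial\mathbb{B}$ and $T_{x_0}\in T(x_0)$, and put $f(\xi)=g(\xi x_0)\,\xi$ for $\xi\in\mathbb{U}$. Since $F\in\widehat{\mathcal{F}}_{\lambda}(\mathbb{B})$, Lemma~\ref{L2} gives $f\in\mathcal{F}_{O}(\lambda)$. Moreover $f(\xi)=T_{x_0}(F(\xi x_0))$, because $T_{x_0}(x_0)=1$. As in the proof of Lemma~\ref{L3}, this yields $A_k=a_k$ for $k=2,3,4$. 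Consequently
\[
J_{2,3}(F)=A_2A_3-A_4=a_2a_3-a_4=J_{2,3}(f),
\]
and Theorem~E (\cite[Theorem 6.1]{ALT2022}) gives the stated bound at once.

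If one prefers an argument that does not quote Theorem~E as a black box, a self-contained route is available. Substitute the formulas \eqref{A234} from Lemma~\ref{L3} and write $\mu=1+2\lambda$. This gives
\[
A_2A_3-A_4=\frac{\mu}{24}\Big(\frac{\mu^2}{16}p_1^3+\frac{\mu}{4}p_1p_2-\frac{3\mu}{4}p_1p_2-\frac{\mu^2}{8}p_1^3-p_3\Big)\cdot(\text{normalisation to be checked}).
\]
The result is a cubic expression of the form $-\frac{\mu}{24}\big(p_3+\alpha p_1p_2+\beta p_1^3\big)$ with explicit $\alpha,\beta$ depending on $\lambda$. One then applies the standard parametrisation of $p_1,p_2,p_3$ for $p\in\mathcal{P}$ (Libera--Z\l otkiewicz formulas). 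By rotation invariance one may take $p_1=c\in[0,2]$, and the triangle inequality in the free parameters $\zeta_1,\zeta_2$ on the closed unit disk then reduces the problem to maximising a function of $(c,|\zeta_1|)$ over $[0,2]\times[0,1]$.

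The main obstacle is this maximisation. The bound has the form $\frac{\mu(7+2\lambda)^{3/2}}{36\sqrt{3(9-4\lambda^2)}}$, so the extremum is interior in $c$, at $c^2$ proportional to $(7+2\lambda)/(9-4\lambda^2)$. Locating it requires a careful case analysis of the quadratic in $|\zeta_1|$ and then a one-variable calculus step in $c$. Since Theorem~E already encodes this analysis, the primary proof will simply invoke it, as in the preceding sections.

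For sharpness, take $F(x)=\frac{f(T_{x_0}(x))}{T_{x_0}(x)}x$, where $f$ is the extremal function of Theorem~E. This is presumably of the form $f(z)=\int_0^z(1-bt+t^2)^{-(\frac12+\lambda)}dt$, with $b$ chosen so that $p_1$ equals the critical value of $c$. Lemma~\ref{L2} guarantees $F\in\widehat{\mathcal{F}}_{\lambda}(\mathbb{B})$. Also $\frac{D^nF(0)(x^n)}{n!}=a_nT_{x_0}^{n-1}(x)x$, so $A_n=a_n$, and therefore $|J_{2,3}(F)|=|J_{2,3}(f)|$ attains the bound.
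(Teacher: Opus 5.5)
Your main argument is essentially the paper's own proof: with $f(\xi)=g(\xi x_0)\xi$, Lemmas~\ref{L2} and~\ref{L3} give $A_k=a_k$ for $k=2,3,4$, so $J_{2,3}(F)=J_{2,3}(f)$ and Theorem~E applies, and sharpness comes from lifting the one-variable extremal function via $F(x)=\frac{f(T_{x_0}(x))}{T_{x_0}(x)}x$, as in the paper's example. Your optional self-contained aside is not needed, but its displayed expansion has the wrong coefficients; the correct form is $A_2A_3-A_4=-\frac{\mu}{24}\bigl(p_3+\frac{\mu}{4}p_1p_2-\frac{\mu^2}{8}p_1^3\bigr)$.
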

\begin{proof} Fix $x_0\in \partial{\mathbb{B}}$ and let $T_{x_0}\in T(x_0)$. Then applying Lemma \ref{L3}, we deduce that 

\begin{align*}
|J_{2,3}(F)|
&=|A_2A_3-A_4|\\
&=|a_2a_3-a_4|\\
&=J_{2,3}(f).
\end{align*}
Hence, from Theorem E (see \cite[Theorem 6.1]{ALT2022}), we obtain that
\[
|J_{2,3}(F)|\leq \frac{(1+2\lambda)(7+2\lambda)^{3/2}}{36\sqrt{3(9-4\lambda^2)}}.
\]

\end{proof}

The following example shows that the upper bound in Theorem \ref{J1} is sharp.

\begin{exm}
Consider the mapping
\[
F(x)=
\frac{f\!\left(T_{x_0}(x)\right)}{T_{x_0}(x)}\,x,
\qquad
x_0\in\partial\mathbb{B},\;
T_{x_0}\in T(x_0),
\]
where
\[
f(z)=\int_{0}^z(1-u_0t+t^2)^{-\left(\frac{1}{2}+\lambda\right)}dt, 
\qquad z\in\mathbb{U},
\]
where $u_0=2\sqrt{\frac{2\lambda+7}{3\lambda(9-4\lambda^2)}}$. 
By Lemma~\ref{L2} we have $F\in \widehat{\mathcal{F}}_{\lambda}(\mathbb{B})$.

Furthermore, a straightforward computation gives
\beas
\frac{D^2F(0)(x^2)}{2!}&=&\frac{1+2\lambda}{2}\sqrt{\frac{2\lambda+7}{3(9-4\lambda^2)}}T_{x_0}(x)x,\\
\frac{D^3F(0)(x^3)}{3!}
&=&\frac{(1+2\lambda)\bigl(16\lambda^2+8\lambda-5\bigr)}{18(9-4\lambda^2)}\bigl(T_{x_0}(x)\bigr)^2x\\
\;\;\;\text{and}\;\;\;\frac{D^4F(0)(x^4)}{4!}&=&\frac{(1+2\lambda)\left(40\lambda^3+68\lambda^2-4\lambda-51\right)}{18(9-4\lambda^2)} \sqrt{\frac{2\lambda+7}{3(9-4\lambda^2)}}\bigl(T_{x_0}(x)\bigr)^3x.
\eeas

Hence,
\beas
A_2&=&\frac{1+2\lambda}{2}\sqrt{\frac{2\lambda+7}{3(9-4\lambda^2)}}\\
A_3&=&\frac{(1+2\lambda)\bigl(16\lambda^2+8\lambda-5\bigr)}{18(9-4\lambda^2)}\\
A_4&=&\frac{(1+2\lambda)\left(40\lambda^3+68\lambda^2-4\lambda-51\right)}{18(9-4\lambda^2)}\sqrt{\frac{2\lambda+7}{3(9-4\lambda^2)}}.
\eeas
Therefore,
\[
|J_{2,3}(F)|
=|A_2A_3-A_4|
=\frac{(1+2\lambda)(7+2\lambda)^{3/2}}{36\sqrt{3(9-4\lambda^2)}}.\]

Thus, the bound obtained in Theorem \ref{J1} is attained, showing that the estimate is sharp.
\end{exm}

Next, removing the restrictive assumption $F(x)=g(x)x$, we generalize Theorem E to higher dimensions under weaker assumptions than those of Theorem \ref{J1}. 
\begin{theo}\label{J2}
Let $F$ be a locally biholomorphic mapping on $\mathbb{B}$, and suppose that $F$ satisfies the assumption \eqref{cc1}. If $F\in \widehat{\mathcal{F}}_{\lambda}(\mathbb{B})$, then for every $x_0\in X$ with $\|x_0\|=1$, we have
\[
|J_{2,3}(F)|\leq \frac{(1+2\lambda)(7+2\lambda)^{3/2}}{36\sqrt{3(9-4\lambda^2)}}.
\] 
 The inequality is sharp.
\end{theo}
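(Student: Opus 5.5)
Fix $x_0\in\partial\mathbb{B}$ and $T_{x_0}\in T(x_0)$. The plan mirrors the proofs of Theorems \ref{H2}, \ref{T2} and \ref{HT2}: reduce $J_{2,3}(F)$ to a one-variable expression in Carath\'eodory coefficients and invoke the argument behind Theorem E.

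\textbf{Step 1: coefficient formulas.} Since $F$ is locally biholomorphic, satisfies \eqref{cc1}, and lies in $\widehat{\mathcal{F}}_{\lambda}(\mathbb{B})$, Lemma \ref{L4} gives a $p\in\mathcal{P}$, namely the one from \eqref{kp1} built from $\phi(\xi)=T_{x_0}(\varphi(\xi x_0))/\xi$. In terms of its coefficients $p_1,p_2,p_3$, the numbers $A_2,A_3,A_4$ are given by \eqref{A234}. These are exactly the expressions of Lemma \ref{L0} for $a_2,a_3,a_4$ of a function in $\mathcal{F}_O(\lambda)$.

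\textbf{Step 2: substitute.} Writing $c=1+2\lambda$, substitution yields
\[
J_{2,3}(F)=A_2A_3-A_4=\frac{c}{24}\left(-p_3+\frac{c-3}{4}\,p_1p_2+\frac{c^2-3c}{16}\,p_1^3\right).
\]
This coincides with the polynomial in $(p_1,p_2,p_3)$ that represents $a_2a_3-a_4$ for $f\in\mathcal{F}_O(\lambda)$.

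\textbf{Step 3: maximize.} The problem is now to bound the modulus of this polynomial over all admissible triples $(p_1,p_2,p_3)$ of the class $\mathcal{P}$. That is precisely the extremal problem solved in the proof of Theorem E (\cite[Theorem 6.1]{ALT2022}), so the same bound
\[
\frac{(1+2\lambda)(7+2\lambda)^{3/2}}{36\sqrt{3(9-4\lambda^2)}}
\]
follows.

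Two routes are available here.
\begin{enumerate}
\item Observe that the proof of Theorem E uses only the membership $p\in\mathcal{P}$ and never the particular function $f$.
\item Directly construct $f\in\mathcal{F}_O(\lambda)$ with $1+zf''/f'=(\tfrac12+\lambda)p+\tfrac12-\lambda$. Such an $f$ exists by integrating $f'=\exp\int_0^z(\phi(t)-1)/t\,dt$, and its coefficients are the $a_k=A_k$ above. Theorem E then applies verbatim.
\end{enumerate}
I would use the second route because it makes the reduction airtight. Within that route, the main obstacle is checking that the $p$ produced in Lemma \ref{L4} genuinely lies in $\mathcal{P}$, i.e.\ $\operatorname{Re}p>0$ rather than $\geq 0$. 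This is settled by the minimum principle, since $p(0)=1$.

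\textbf{Step 4: sharpness.} Mappings of the form $F(x)=g(x)x$, with $g(x)=f(T_{x_0}(x))/T_{x_0}(x)$, satisfy \eqref{cc1} with $H_{F,k}(x)=a_{k+1}T_{x_0}(x)^k$. Hence the extremal mapping in the example following Theorem \ref{J1} belongs to the class considered here and attains the bound.
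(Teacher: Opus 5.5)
Your reduction is correct and has the same skeleton as the paper's proof. Lemma~\ref{L4} expresses $A_2,A_3,A_4$ through the coefficients of the Carath\'eodory function $p$ in \eqref{kp1}. The bound is then imported from Theorem~E. Sharpness comes from the extremal mapping after Theorem~\ref{J1}, which satisfies \eqref{cc1}.

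The difference lies in how Theorem~E is invoked. The paper writes $J_{2,3}(F)$ as a polynomial in $p_1,p_2,p_3$ and says the rest ``proceeds analogously'' to \cite{ALT2022}, which is your route~1. Your route~2 instead realizes the same $p$ by an actual $f\in\mathcal{F}_O(\lambda)$, via $f'(z)=\exp\int_0^z(\phi(t)-1)/t\,dt$. Then $a_k=A_k$ for $k=2,3,4$ by Lemma~\ref{L0}, and Theorem~E applies as a black box. Your minimum-principle remark is genuinely needed, since $\widehat{\mathcal{F}}_{\lambda}(\mathbb{B})$ is defined with $\geq$ while $\mathcal{P}$ and $\mathcal{F}_O(\lambda)$ require strict inequality. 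With it, route~2 is the more rigorous reduction and never requires reopening the one-variable proof.

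Route~2 also makes the explicit polynomial irrelevant, which is fortunate, because your Step~2 display is wrong. From \eqref{A234} one gets
\[
J_{2,3}(F)=\frac{c}{24}\left(-p_3-\frac{c}{4}\,p_1p_2+\frac{c^2}{8}\,p_1^3\right)
=\frac{c^3}{192}p_1^3-\frac{c^2}{96}p_1p_2-\frac{c}{24}p_3 .
\]
As a check, take $\lambda=\tfrac12$, $p(z)=(1+z)/(1-z)$ and $f(z)=z/(1-z)$. Then $J_{2,3}=1\cdot 1-1=0$, but your expression gives $-\tfrac13$. The paper's displayed polynomial also deviates from the correct one in the $p_1p_2$ and $p_3$ coefficients. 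You should correct or delete that display. Had you followed route~1 with it, you would have maximized the wrong functional.
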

\begin{proof} Fix $x_0\in \partial{\mathbb{B}}$ and let $T_{x_0}\in T(x_0)$. Next, applying Lemma \ref{L4}, we deduce that 
\[J_{2,3}(F)=\frac{(1+2\lambda)^3}{192}p_1^3-\frac{(1+2\lambda)^2}{36}p_1p_2-\frac{1+2\lambda}{12}p_3.\]
Proceeding analogously to the proofs of Theorems E (see \cite[Theorem 6.1]{ALT2022}), we deduce that

\[
|J_{2,3}(F)|\leq \frac{(1+2\lambda)(7+2\lambda)^{3/2}}{36\sqrt{3(9-4\lambda^2)}}.
\]
 The sharpness of the result is an immediate consequence of Theorem \ref{J1}. This completes the proof.
\end{proof}

\section*{{\bf Declarations}}
\subsection*{Funding} Not applicable.
\subsection*{Data Availability Statement}
Data sharing is not applicable to this article as no datasets were generated or analyzed during the current study.
\subsection*{Conflict of Interest}
The authors declare that they have no conflict of interest. 
\subsection*{Author Contributions}
Both authors contributed equally to this work.

\end{document}